\newcommand{\bbz}{\mathbb{Z}}
\newcommand{\bbr}{\mathbb{R}}
\newcommand{\tC}{{\widetilde{C}}}
\newcommand{\RR}{\mathbb{R}}
\newcommand{\AAA}{{\mathcal{A}}}
\newcommand{\CCC}{{\mathcal{C}}}
\newtheorem{theorem}{Theorem}[section]
\newtheorem{corollary}[theorem]{Corollary}
\newtheorem{proposition}[theorem]{Proposition}
\newtheorem{lemma}[theorem]{Lemma}
\newtheorem{defn}[theorem]{Definition}
\newtheorem{example}[theorem]{Example}
\newtheorem{remark}[theorem]{Remark}
\newtheorem{observation}[theorem]{Observation}
\newtheorem{claim}[theorem]{Claim}
\numberwithin{figure}{section}
\begin{document}
\title[Triangulations, Arrangements and Tableaux] {Triangle-Free
Triangulations, Hyperplane Arrangements and Shifted Tableaux}
%\title[Flips, Arrangements and Tableaux]{Flip Graphs, Hyperplane
%Arrangements\\ and Shifted Young Tableaux}%\\ or: Triangle-Free Triangulations - Part II}

\author{Ron M. Adin}
\address{Department of Mathematics\\
Bar-Ilan University\\
Ramat-Gan 52900\\
Israel} \email{radin@math.biu.ac.il}

\author{Yuval Roichman}
\address{Department of Mathematics\\
Bar-Ilan University\\
Ramat-Gan 52900\\
Israel} \email{yuvalr@math.biu.ac.il}

%\keywords{Flip graphs, hyperplane arrangements and tableaux

\date{August 6, '12}

\begin{abstract}
Flips of diagonals in colored triangle-free triangulations of a
convex polygon are interpreted as
moves between two adjacent chambers in a certain %supersolvable
graphic hyperplane arrangement. Properties of geodesics in the
associated flip graph  are deduced. In particular, it is shown
that: (1) every diagonal is flipped exactly once in a geodesic
between distinguished pairs of antipodes; (2) the number of
geodesics between these antipodes is equal to twice the number of
standard Young tableaux of a truncated shifted staircase shape.
\end{abstract}

\maketitle

\section{Introduction}

%\subsection{Outline}\ \\

It was shown in~\cite{AFR} that the diameter of the flip graph on
the set of all colored triangle-free triangulations of a convex
$n$-gon (to be defined in Subsection~\ref{2.1}) is exactly
$n(n-3)/2$. Observing that this is the number of diagonals in a
convex $n$-gon, it was conjectured by Richard Stanley that
all diagonals are flipped %exactly once
in a geodesic between two
antipodes.

\medskip

In this paper Stanley's conjecture is proved for distinguished
pairs of antipodes (Corollary~\ref{RS} below). The proof applies a
$\widetilde C_n$-action on arc permutations, which yields an embedding
of the flip graph in a graphic hyperplane arrangement. Geodesics
between distinguished antipodes in the flip graph are then
interpreted as minimal galleries from a given chamber $c$ to the
negative chamber $-c$, while diagonals are interpreted as
separating hyperplanes.

\bigskip

The set of geodesics between these antipodes is further studied in
Section~\ref{5}. It is shown that the number of these geodesics is
equal to twice the number of Young tableaux of a truncated shifted
staircase shape. Motivated by this result, product formulas for
this number, as well as for other truncated shapes, were found by
Greta Panova~\cite{Panova} and Ronald C. King and the
authors~\cite{AKR}.
%Finally, these results are extended
%to other closely related $\widetilde C_n$ actions.

\section{Triangle-Free Triangulations}\label{P3}

In this Section we %First,
recall basic concepts and main
results from~\cite{AFR}.

\subsection{Basic Concepts}\label{2.1}\ \\

Label the vertices of a convex $n$-gon $P_{n}$ ($n > 4$) by the
elements $0,\ldots,n-1$ of the additive cyclic group $\bbz_{n}$.
Consider a triangulation (with no extra vertices) of the polygon.
Each edge of the polygon is called an {\em external edge} of the
triangulation; all other edges of the triangulation are called
{\em internal edges}, or {\em chords}.

\medskip

\begin{defn}
A triangulation of a convex $n$-gon $P_{n}$ is called
{\em internal-triangle-free}, or simply {\em triangle-free}, 
if it contains no triangle with $3$ internal
edges. The set of all triangle-free triangulations of $P_{n}$ is
denoted $TFT(n)$.
\end{defn}

A chord in $P_{n}$ is called {\em short} if it connects the
vertices labeled $i-1$ and $i+1$, for some $i\in \bbz_{n}$. A
triangulation is triangle-free if and only if it contains only two
short chords~\cite[Claim 2.3]{AFR}.

%A {\em proper coloring} of a triangulation $T\in TFT(n)$ is a
%labeling of the chords by $0,\dots,n-4$ in the following inductive
%way: Choose a short chord and label it $0$. Inductively, a chord
%which was not yet labeled and is contained in a triangle whose
%other chord has been labeled $i$, is labeled $i+1$. It is easy to
%see that this uniquely defines the coloring.
A {\em proper coloring} (or {\em orientation}) of a triangulation $T\in TFT(n)$ is a
labeling of the chords by $0,\ldots,n-4$ such that
\begin{enumerate}
\item
One of the short chords is labeled $0$.
\item
If a triangle has exactly two internal edges then
their labels are consective integers $i$, $i+1$.
\end{enumerate}
It is easy to see that each $T \in TFT(n)$ has exactly two proper colorings.
The set of all properly colored triangle-free triangulations is denoted $CTFT(n)$.
%\end{defn}

Each chord in a triangulation is a diagonal of a unique quadrangle
(the union of two adjacent triangles). Replacing this chord by the
other diagonal of that quadrangle is a {\em flip} of the chord. A
flip in a colored triangulation preserves the color of the flipped
diagonal.

\begin{defn}
The {\em colored flip graph} $\Gamma_n$ is defined as follows:
%The {\em flip graph} $\Gamma_n$ is the following arc-colored graph:
the vertices are all the colored triangle-free triangulations in
$CTFT(n)$. Two triangulations are connected in $\Gamma_n$ by an
edge labeled $i$ if one is obtained from the other by a flip of
the chord labeled $i$.
\end{defn}

%\medskip
%One may easily verify that %for $n>4$
%
%\begin{claim}\label{CTFT1} \cite[Corollary 2.10]{AFR}
%For $n>4$
%$$
%\#CTFT(n) =  n 2^{n-4}.
%$$
%\end{claim}

See Figure~\ref{f.Gamma_7} for a drawing of $\Gamma_7$,
where the coloring of a triangulation is displayed by shading the triangle 
with the short chord labeled $0$ and two external edges as sides.  

\begin{figure}[ht]\label{f.Gamma_7}
\begin{center}
%eps%\epsfysize = 2.0 in \centerline{\epsffile{heptagons_improved.eps}}
%pdf%begin
%trim option's parameter order: left bottom right top
\includegraphics[scale=0.4, trim = 0pt 220pt 0pt 0pt, clip]{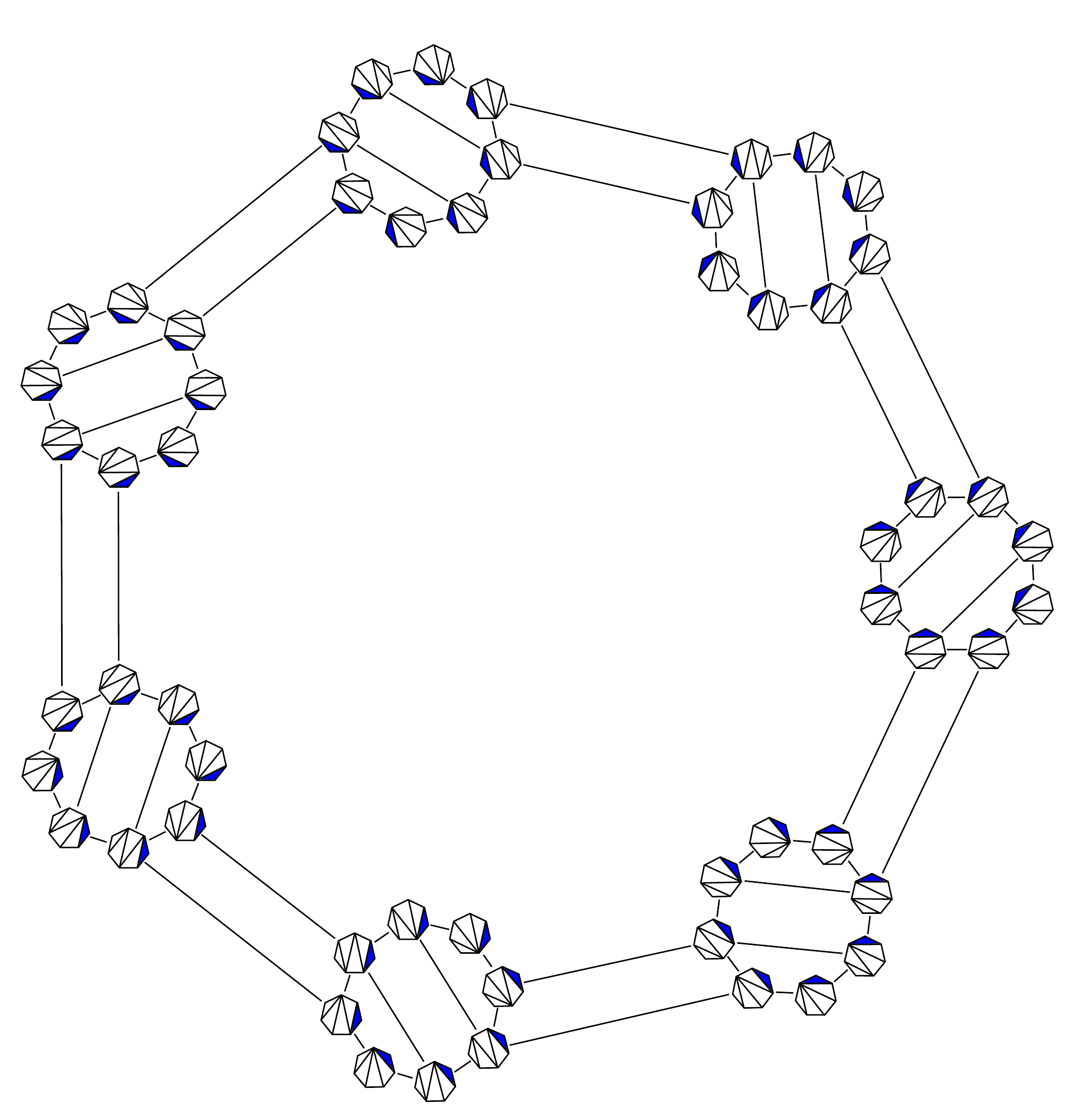}
%pdf%end
\caption{$\Gamma_7$} 
\label{fig:Gamma_7}
\end{center}
\end{figure}

\subsection{A $\widetilde C_{n-4}$-Action on Triangle-Free Triangulations}\label{2.2}\ \\

Let $\widetilde C_n$ be the affine Weyl group generated by
%the set of generators
$$
S=\{s_0,s_1,\ldots, s_{n-1},s_n\}
$$
subject to the Coxeter relations
\begin{equation}\label{relation1}
s_i^2=1\qquad (\forall i),
\end{equation}
\begin{equation}\label{relation2}
(s_i s_j)^2=1\qquad (|j-i|>1),
\end{equation}
\begin{equation}\label{relation3}
(s_i s_{i+1})^3=1\qquad  (1\le i \le n-2),
\end{equation}
and
\begin{equation}\label{relation4}
(s_i s_{i+1})^4=1 \qquad (i=0,n-1).
\end{equation}

The %affine Weyl
group $\tC_{n-4}$ acts naturally on $CTFT(n)$ by flips: Each generator
$s_i$ flips the chord labeled $i$ in $T\in CTFT(n)$, provided that
the result still belongs to $CTFT(n)$. 
If this is not the case then $T$ is unchanged by $s_i$.

\begin{proposition}\label{t.action1}\cite[Proposition 3.2]{AFR}
This operation determines a transitive $\tC_{n-4}$-action on
$CTFT(n)$.
\end{proposition}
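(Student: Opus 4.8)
The plan is to verify two separate assertions: that the operation genuinely defines a group action (i.e., that the assignment respects the Coxeter relations of $\tC_{n-4}$), and that the resulting action is transitive. I would handle these in order, since transitivity only makes sense once we know we have an action at all.

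\medskip

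First I would confirm that the generators $s_i$ act as involutions and satisfy the braid and commutation relations. Each $s_i$ is defined to flip the chord labeled $i$ when the result stays in $CTFT(n)$ and to fix $T$ otherwise; since a flip is its own inverse and flipping twice returns the original diagonal, relation~\eqref{relation1} ($s_i^2=1$) is immediate in both the ``flip'' and ``do nothing'' cases. For relation~\eqref{relation2}, flips of chords labeled $i$ and $j$ with $|i-j|>1$ involve disjoint quadrangles, so the two operations commute and $(s_is_j)^2=1$. The subtler relations~\eqref{relation3} and~\eqref{relation4} concern adjacent labels, where the two relevant quadrangles share a triangle; here I would argue locally, examining the small configuration of triangles carrying labels $i,i+1$ (and, for~\eqref{relation4}, the special behavior at the boundary labels $0$ and $n-1$ governed by the short chords and the coloring constraint in the definition of proper coloring). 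The braid relation of order $3$ should follow because three successive flips of adjacent chords permute a local fan of triangles cyclically, while the order $4$ relation at the ends reflects the extra symmetry forced by the short-chord/color-$0$ condition. Verifying that each braid word acts the same way on every $T\in CTFT(n)$ — including degenerate cases where some intermediate flip is blocked and the generator acts trivially — is where the real bookkeeping lies.

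\medskip

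The main obstacle I expect is precisely this case analysis for the adjacent relations: one must check that when a flip is ``blocked'' (because it would create a triangle with three internal edges, leaving $CTFT(n)$), the resulting trivial action still conspires with the neighboring generators to satisfy the order-$3$ and order-$4$ relations. This requires a careful enumeration of the local triangulation patterns around two (or three) consecutively-labeled chords, matching each pattern against the Coxeter relation it must satisfy.

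\medskip

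For transitivity, I would argue that any $T\in CTFT(n)$ can be transformed into a fixed reference triangulation by a sequence of flips, each of which is a legal $s_i$-move. The natural strategy is to show that flips suffice to reduce an arbitrary triangle-free triangulation to a canonical ``fan-like'' form, using the characterization (cited from~\cite{AFR}) that a triangulation is triangle-free if and only if it has exactly two short chords. Since the underlying (uncolored) flip graph is known to be connected, and the $\tC_{n-4}$ generators realize exactly the color-preserving flips, one checks that the orbit of the reference triangulation already exhausts $CTFT(n)$; the coloring data is tracked consistently because flips preserve the label of the flipped diagonal. As this proposition is quoted verbatim from \cite[Proposition 3.2]{AFR}, I would ultimately lean on that reference for the complete relation-by-relation verification, presenting the argument above as the conceptual skeleton.
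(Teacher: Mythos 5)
Your plan has two genuine gaps, one of which is a circularity. For transitivity you invoke the fact that ``the underlying (uncolored) flip graph is known to be connected,'' but this is not an independently available fact: the flip graph of \emph{all} triangulations of a polygon (the associahedron graph) is classically connected, whereas here the moves are restricted to flips that keep the triangulation triangle-free (and properly colored), and connectivity of \emph{that} graph is precisely equivalent to the transitivity you are trying to prove --- the generators $s_i$ realize exactly the edges of $\Gamma_n$. So your transitivity argument assumes the conclusion. Moreover, even granting connectivity of the uncolored triangle-free flip graph, you would still owe an argument that the two proper colorings of a single triangulation lie in one orbit, which you do not address.

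The second gap is that the Coxeter relations \eqref{relation3} and \eqref{relation4} are asserted, not proved: ``three successive flips permute a local fan cyclically'' and ``extra symmetry at the ends'' are not arguments, and you yourself flag the blocked-flip case analysis as undone. The idea that closes both gaps --- and is how [AFR] (and this paper, in the parallel Proposition~\ref{t.action} for arc permutations via Observation~\ref{action-on-vectors}) proceeds --- is to transport the action through the bijection $\varphi:CTFT(n)\to\bbz_n\times\bbz_2^{n-4}$ recalled in Observation~\ref{obs-varphi-1}: the middle generators act by swapping adjacent $\bbz_2$-coordinates (so a blocked flip is just a swap of equal coordinates, and no case analysis is needed), $s_0$ shifts the $\bbz_n$-coordinate and toggles the first $\bbz_2$-coordinate, and $s_{n-4}$ toggles the last one. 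In this picture all relations \eqref{relation1}--\eqref{relation4} are mechanical computations, and transitivity is immediate: the parabolic subgroup $\langle s_1,\dots,s_{n-4}\rangle\cong B_{n-4}$ acts as signed permutations, transitively on $\bbz_2^{n-4}$ for each fixed first coordinate, while $s_0$ moves the $\bbz_n$-coordinate to any value. Without this (or some equivalent device), your sketch does not reduce to a complete proof except by wholesale citation of [AFR].
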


This affine Weyl group action on $CTFT(n)$ was used to calculate
the diameter of $\Gamma_n$.

\begin{theorem}\label{diameter}\cite[Theorem 5.1]{AFR}
The diameter of $\Gamma_n$ $(n>4)$ is $n(n-3)/2$.
\end{theorem}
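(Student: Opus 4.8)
The plan is to convert the purely geometric flip distance into a combinatorial quantity and then bound it on two sides. By Proposition~\ref{t.action1} the group $\tC_{n-4}$ acts transitively on $CTFT(n)$ by flips, so $\Gamma_n$ is connected and its diameter is the maximal flip distance $d_{\Gamma_n}(T,T')$ over pairs of colored triangulations. Since each flip alters exactly one chord, I would first pin down the target constant by recalling that a convex $n$-gon has exactly $\binom{n}{2}-n=n(n-3)/2$ diagonals; the strategy is then to prove the two matching inequalities $\mathrm{diam}(\Gamma_n)\le n(n-3)/2$ and $\mathrm{diam}(\Gamma_n)\ge n(n-3)/2$.

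For the upper bound I would argue by induction on $n$, peeling an ear. By the cited characterization a triangle-free triangulation has exactly two short chords, and a short chord cuts off an ear (a boundary triangle with two external edges). Given $T,T'\in CTFT(n)$, I would fix a position where $T'$ already carries a short chord and first move the matching short chord of $T$ into that position; the point to establish is that sliding a short chord around the polygon, while maintaining triangle-freeness and the proper coloring, costs at most $n-2$ flips. Removing the now-common ear yields triangulations of $P_{n-1}$ that remain triangle-free (the peeled short chord becomes an external edge, so no all-internal triangle can be created), and after relabeling the colors $0,\ldots,n-5$ one applies the inductive bound $d\le(n-1)(n-4)/2$. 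The identity $n(n-3)/2-(n-1)(n-4)/2=n-2$ shows these costs telescope exactly to the claimed bound.

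For the lower bound I would attach to each $T$ a sign vector $\sigma(T)\in\{+,-\}^{n(n-3)/2}$, with one coordinate $\sigma_d(T)$ per potential diagonal $d$ of $P_n$, designed so that a single flip toggles exactly one coordinate. Granting such a flip-monotone statistic, $d_{\Gamma_n}(T,T')$ is bounded below by the Hamming distance between $\sigma(T)$ and $\sigma(T')$, so it suffices to exhibit a pair whose sign vectors are opposite in every coordinate. Transitivity (Proposition~\ref{t.action1}) lets me produce a candidate \emph{antipodal} pair $T$ and $w\cdot T$ for a suitable word $w$ in the generators of $\tC_{n-4}$, and I would then verify that every one of the $n(n-3)/2$ coordinates indeed flips between them, forcing $d_{\Gamma_n}(T,w\cdot T)\ge n(n-3)/2$.

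The main obstacle is the lower bound, specifically the construction of the $\pm$-statistic $\sigma$ and the proof that each flip changes exactly one of its coordinates: this is precisely the global invariant that rules out any shortcut, and it is considerably more delicate than the essentially local upper-bound induction. I expect this is where the hyperplane-arrangement viewpoint becomes decisive, since realizing each coordinate of $\sigma$ as a side of a separating hyperplane makes both the single-coordinate toggling property and the existence of an everywhere-opposite antipodal pair transparent; the numerical coincidence of the diameter with the number of diagonals is then explained by the fact that the antipodes $c$ and $-c$ are separated by all the hyperplanes at once.
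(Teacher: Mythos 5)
This theorem is not proved in the present paper at all: it is quoted from \cite{AFR}, and (as the remark after Lemma~\ref{coherent-orientations} indicates) the proof there rests on the $\widetilde C_{n-4}$-action and an algebraic length-function argument on $\bbz_n\times\bbz_2^{n-4}$, not on an ear-peeling induction. Your lower-bound half correctly anticipates the mechanism this paper later formalizes --- each flip crosses exactly one hyperplane of $\AAA(K'_n)$ (Theorems~\ref{main111} and~\ref{main112}) and $T$, $T^R$ land in opposite chambers (Proposition~\ref{reverse-2}), hence are separated by all $n(n-3)/2$ hyperplanes --- but as written it is only a plan: you explicitly ``grant'' the flip-monotone sign vector instead of constructing it, and that construction is precisely the hard content.

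The genuine gap is in your upper bound. The claim that the label-$0$ short chord can be slid into any prescribed position at a cost of at most $n-2$ flips is false, for two structural reasons. First, two consecutive flips of the chord labeled $0$ cancel (after a flip its surrounding quadrangle is unchanged), so productive slides must be interleaved with flips of other chords; second, the direction of a slide is dictated by the chord labeled $1$, and many flips are simply blocked in $CTFT(n)$ because they would create internal triangles. Concretely, in the star triangulation $T_0$ with chords $[0,2],[0,3],\dots,[0,n-2]$ labeled $0,\dots,n-4$, the only flippable chords are those labeled $0$ and $n-4$; sliding the chord labeled $0$ backward by a single position forces a cascade in which the chords labeled $n-4,n-5,\dots,1$ are each flipped first, costing $n-3$ flips for one step, and repeated slides repeat the cascade. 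So worst-case repositioning is quadratic in $n$, and the telescoping $(n-2)+(n-1)(n-4)/2=n(n-3)/2$ never closes. The pair $T_0$, $T_0^R$ exposes the conceptual problem: they have \emph{identical} underlying chords (ears in identical geometric positions) yet sit at the full distance $n(n-3)/2$, because their colorings are reversed and their equally-labeled ears sit at $[0,2]$ versus $[0,n-2]$. The diameter of the colored graph is driven by color relabeling, which no induction based on matching geometric ears with linear repositioning cost can capture.
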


For any colored triangle-free triangulation $T$, denote by $T^R$
the colored triangle-free triangulation obtained by reversing the
labeling in $T$; namely, the chord labeled $i$ in
$T$ is labeled $n-4-i$ in $T^R$ ($0 \le i \le n-4$).

\begin{theorem}\label{antipodes}\cite[Proposition 5.6]{AFR}
For every $n>4$ and $T\in CTFT(n)$, the distance between $T$ and
$T^R$ in $\Gamma_n$ is exactly $n(n-3)/2$.
\end{theorem}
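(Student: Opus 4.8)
The plan is to prove the two inequalities separately. The upper bound is immediate: by Theorem~\ref{diameter} the diameter of $\Gamma_n$ equals $n(n-3)/2$, so $d(T,T^R)\le n(n-3)/2$ for every $T$. The whole content is therefore the matching lower bound $d(T,T^R)\ge n(n-3)/2$. Two preliminary observations guide the argument. First, the two proper colorings of a fixed triangle-free triangulation are precisely $T$ and $T^R$: reversing the labeling interchanges the two short chords (labeled $0$ and $n-4$) and, more generally, sends the chord labeled $i$ to the one labeled $n-4-i$. Thus $T$ and $T^R$ lie over the \emph{same} set of diagonals, and a geodesic between them must reverse the role of every color. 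Second, since a flip preserves the color of the flipped chord, along any flip path there is at every moment exactly one chord of each color $0,\dots,n-4$, and a flip of color $i$ relocates the color-$i$ chord to a new diagonal while fixing the other $n-4$ colored chords.

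For the lower bound I would realize $\Gamma_n$ inside a graphic hyperplane arrangement. Let $G$ be the graph on vertex set $\bbz_n$ whose edges are the $n(n-3)/2$ diagonals of $P_n$ (that is, $K_n$ with the boundary $n$-cycle deleted), and let $\AAA$ be its graphic arrangement, with one hyperplane $\{x_i=x_j\}$ for each diagonal $\{i,j\}$; its chambers are the acyclic orientations of $G$. Using the $\tC_{n-4}$-action of Proposition~\ref{t.action1}, I would encode each $T\in CTFT(n)$ as an arc permutation $\pi_T$ of $\bbz_n$ --- reading the vertices of $T$ along its dual path in order of increasing color --- and let $\phi(T)$ be the chamber of $\AAA$ obtained by orienting each diagonal from its smaller to its larger endpoint in the $\pi_T$-order (this is automatically acyclic, being the restriction of a linear order). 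The two properties I would establish are: (i) if $T'$ is obtained from $T$ by a single flip, then $\phi(T)$ and $\phi(T')$ differ by an adjacent transposition of the underlying arc permutation, so their orientations disagree on \emph{at most one} diagonal, i.e.\ the flip crosses at most one hyperplane of $\AAA$; and (ii) $\pi_{T^R}$ is the reversal of $\pi_T$, whence $\phi(T^R)=-\phi(T)$ is the antipodal chamber, separated from $\phi(T)$ by \emph{every} hyperplane.

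Granting (i)--(ii), the lower bound is a counting argument on galleries. Along any flip path $T=T_0,T_1,\dots,T_m=T^R$ the images $\phi(T_0),\dots,\phi(T_m)$ form a gallery in $\AAA$ that crosses at most one wall at each step, by (i). Since $\phi(T^R)=-\phi(T)$ lies on the opposite side of every hyperplane, by (ii), each of the $n(n-3)/2$ hyperplanes is crossed an odd number of times, in particular at least once. As no step contributes more than one crossing, the number of steps satisfies $m\ge n(n-3)/2$. Combined with the upper bound this yields $d(T,T^R)=n(n-3)/2$.

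The main obstacle is property~(i): showing that a flip corresponds to an adjacent transposition of $\pi_T$, so that exactly one pairwise comparison --- hence the orientation of at most one diagonal --- is reversed. This is where the combinatorics of the encoding must be checked case by case: one must verify that flipping the chord labeled $i$ swaps two consecutive entries of $\pi_T$, that the swapped pair spans a diagonal (contributing one wall) or a boundary edge (contributing none, which only helps the inequality), and that the outcome is again the arc permutation of an element of $CTFT(n)$. Property~(ii) is comparatively routine, following from the fact that reversing the color labeling reverses the traversal direction of the dual path and therefore the reading order of $\pi_T$; reversing a permutation flips every pairwise comparison and hence the orientation of every diagonal, placing $\phi(T^R)$ at the antipode.
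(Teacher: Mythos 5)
Your proposal is correct in outline, and it takes a genuinely different route from the paper, which in fact does not prove this statement at all: Theorem~\ref{antipodes} is imported from \cite{AFR} (Proposition 5.6 there), where the lower bound is obtained algebraically, via the interpretation of the rank function as a length function on the affine Weyl group $\widetilde C_{n-4}$ (see the remark following Lemma~\ref{coherent-orientations}). You instead deduce the lower bound geometrically: each flip crosses at most one hyperplane of $\AAA(K'_n)$, while $T$ and $T^R$ map to opposite chambers, so any flip path must cross all $n(n-3)/2$ hyperplanes at least once. Your properties (i) and (ii) are precisely the content of Theorems~\ref{main111}--\ref{main112} and of Observation~\ref{reverse} together with Proposition~\ref{reverse-2}, which the paper proves for a different purpose --- namely Corollary~\ref{RS}, whose proof \emph{assumes} Theorem~\ref{antipodes}; since the proofs of those results nowhere use Theorem~\ref{antipodes}, your argument is not circular, and what it buys is that the arrangement machinery re-derives the antipodality statement itself, with only an upper bound imported from \cite{AFR}. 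Two caveats. First, for the upper bound you should invoke only the inequality $\diam(\Gamma_n)\le n(n-3)/2$: if the diameter \emph{lower} bound in \cite{AFR} were itself derived from the antipodes statement, citing the full Theorem~\ref{diameter} would be circular, whereas the upper-bound half is independent of it. Second, your sketch correctly identifies but does not carry out the case analysis behind property (i) --- that a nontrivial flip of the chord labeled $i$ is an adjacent transposition of the arc permutation whose two swapped values span the erased diagonal, hence a chord of $K'_n$, so that exactly one hyperplane (and never more) is crossed; this is the real work, and it is exactly the computation done in the paper's Section 6 and in the proof of Theorem~\ref{main}, so your plan is realizable as stated.
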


%\section{Preliminaries}

\section{A $\widetilde C_{n-2}$-Action on Arc Permutations}\label{2nd-action}

\subsection{Arc Permutations}\label{P1}\ \\

Let $S_n$ be the symmetric group on the letters $1,\dots,n$.
Denote a permutation $\pi\in S_n$ by the sequence
$[\pi(1),\dots,\pi(n)]$ and transpositions by $(i,j)$.
%Let
%$\sigma_i:=(i,i+1)$ - a simple reflection, $S:=\{\sigma_i:\ 1\le i< n\}$ -
%the Coxeter generating set, and $w_0:=[n,n-1,\dots,1]$ - the
%longest permutation (with respect to the Coxeter length).

{\em Intervals} in the cyclic group $\bbz_n$  are subsets of the form
%$[i,i+k]:=
$\{i, i+1, \dots, i+k\}$, where addition is modulo $n$.

%\{r:\ i\le r\le j\}$ or $[j,i]:= [0,i]\cup [j,n-1]$, where $i,j\in
%\bbz_n$.

\begin{defn} A permutation $\pi\in S_n$ is an {\em arc permutation}
if, for every $1\le k\le n$, the first $k$ letters in $\pi$
form an interval in $\bbz_n$ (where $n\equiv 0$, namely, the
letter $n$ is identified with zero).
%Namely, for every
%$1\le i<j\le k$ either: for every $r\in [\pi(i), \pi (j)]$ there
%exist $1\le s\le k$ such that $r=\pi(s)$, or for every $r\in
%[\pi(j), \pi (i)]$ there exist $1\le s\le k$ such that $r=\pi(s)$.
\end{defn}

\noindent{\bf Example.} $\pi = [1,2,5,4,3]$ is an arc permutation
in $S_5$, but $\pi = [1,2,5,4,3,6]$ is not an arc permutation in
$S_6$, since $\{1,2,5\}$ is an interval in $\bbz_5$ but not in
$\bbz_6$.

\medskip

The following claim is obvious.

\begin{claim}
The number of arc permutations in $S_n$ is $n2^{n-2}$.
\end{claim}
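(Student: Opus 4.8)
The plan is to count arc permutations by building them one letter at a time, keeping track of the set of letters used so far. Writing $I_k=\{\pi(1),\dots,\pi(k)\}$, the defining condition is precisely that each $I_k$ is an interval in $\bbz_n$. So I would count the ordered choices of $\pi(1),\pi(2),\dots,\pi(n)$, at each stage counting how many values of $\pi(k)$ keep $I_k$ an interval, given that $I_{k-1}$ already is one. The point is that this per-stage count is constant (equal to $2$) across all the intermediate stages, which is exactly what produces the power of $2$ in the formula.

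For the first letter there is no constraint, so $\pi(1)$ may be any of the $n$ elements of $\bbz_n$, and the singleton $I_1$ is trivially an interval. For $2\le k\le n-1$ I would argue as follows: given an interval $I_{k-1}=\{a,a+1,\dots,a+k-2\}$ of size $k-1$, the only intervals of size $k$ containing it are $\{a-1,a,\dots,a+k-2\}$ and $\{a,\dots,a+k-2,a+k-1\}$, obtained by adjoining the element immediately preceding or immediately following the block. Hence $\pi(k)$ must equal $a-1$ or $a+k-1$, giving exactly two admissible choices, and these two elements are distinct precisely because $a-1\equiv a+k-1\pmod n$ would force $k\equiv 0\pmod n$, impossible for $2\le k\le n-1$. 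Finally, at $k=n$ the set $I_{n-1}$ has size $n-1$, so its complement in $\bbz_n$ is a single element and $\pi(n)$ is forced.

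Multiplying the number of choices over all stages then yields $n\cdot 2^{\,n-2}\cdot 1=n2^{n-2}$, as claimed. The only point requiring genuine care, which I would isolate as the heart of the argument, is the statement that an interval of size $m$ in $\bbz_n$ has exactly two one-element interval-extensions when $1\le m\le n-2$ and exactly one when $m=n-1$; once this is established, the rest is a routine product of the stagewise counts. I do not expect any serious obstacle here, consistent with the paper's description of the statement as obvious; the write-up is essentially a bookkeeping of the two-sided extension of a cyclic interval together with the degeneration at the last step.
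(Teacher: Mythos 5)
Your proof is correct and follows exactly the paper's approach: $n$ choices for the first letter, two choices (extend the cyclic interval at either end) for each of letters $2,\dots,n-1$, and a forced last letter. You simply make explicit the distinctness of the two extensions, which the paper leaves implicit.
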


\begin{proof}
There are $n$ options for $\pi(1)$ and two options for every other
letter except the last one.
% For $1<k<n$, if the set of first $k$
%letters forms the arc $[i, (i+k)\hbox{mod}\, n]$ then $\pi(k+1)$
%is either $(i-1)\hbox{mod}\, n$ or $(i+k+1)\hbox{mod}\, n$.
\end{proof}

Denote by $U_n$ the set of arc permutations in $S_n$.

\begin{defn}
Define $\phi:U_n \to \bbz_n\times \bbz_2^{n-2}$ as follows:
\begin{enumerate}
\item
$$
\phi(\pi)_1:=\pi(1).
$$
\item
For every $2 \le i\le n-1$, if $\{\pi(1),\dots,\pi(i-1)\}$ is the arc $[k,m]$
then $\pi(i)$ is either $k-1$ or $m+1$. Let
$$
\phi(\pi)_i := \begin{cases}
0, &\hbox{\rm if } \pi(i)=k-1;\\
1, &\hbox{\rm if } \pi(i)=m+1.
\end{cases}
$$
\end{enumerate}
\end{defn}
$\phi$ is clearly a bijection.

\subsection{A $\widetilde C_{n-2}$-Action}\label{Cn-action}\ \\

Let $\{\sigma_i:\ 1\le i\le n-1\}$ be the Coxeter generating set of the
symmetric group $S_n$, where $\sigma_i$ is identified with the
adjacent transposition $(i,i+1)$.

\begin{defn}
For every $0\le i\le n-2$ define a map $\rho_i:U_n\to U_n $ as
follows:
$$
\rho_i(\pi)=\begin{cases}
\pi \sigma_{i+1},&\hbox{\rm if }\pi \sigma_{i+1}\in U_n;\\
\pi, &\hbox{\rm otherwise.}
\end{cases} \qquad(\forall \pi\in U_n)
$$
\end{defn}

Note that, for $\pi\in U_n$, $\pi \sigma_{i+1}\in U_n$ iff
either $i\in\{0,n-2\}$ or $\phi(\pi)_{i+1} \ne \phi(\pi)_{i+2}$.

%From now on we will abbreviate $\rho(s_i)\pi$ by $s_i\pi$.

\begin{observation}\label{action-on-vectors}
For every  $\pi\in U_n$ and $1 \le j \le n-1$,
\[
\phi(\rho_0(\pi))_j = \begin{cases}
\phi(\pi)_1 - 1 \pmod n, &\hbox{\rm if } j=1 \ {\rm{and}}\ \phi(\pi)_2=0;\\
\phi(\pi)_1 + 1 \pmod n, &\hbox{\rm if } j=1 \ {\rm{and}}\ \phi(\pi)_2=1;\\
\phi(\pi)_2 + 1 \pmod 2, &\hbox{\rm if } j=2;\\
\phi(\pi)_j, &\hbox{\rm if } j\ne 1,2,\\
\end{cases}
\]
\[
\phi(\rho_i(\pi))_j = \phi(\pi)_{\sigma_{i+1}(j)}\qquad (1 \le i \le n-1,\ \forall j)
\]
and
\[
\phi(\rho_{n-2}(\pi))_j = \begin{cases}
\phi(\pi)_j, &\hbox{\rm if } j\ne n-1;\\
\phi(\pi)_{n-1}+1 \pmod 2, &\hbox{\rm if } j= n-1.
\end{cases}
\]
\end{observation}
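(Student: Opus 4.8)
The plan is to verify Observation~\ref{action-on-vectors} by direct case analysis, tracking how each $\rho_i$ transforms the building-up process encoded by $\phi$. The key fact to keep in mind throughout is the description of $\phi$: the coordinate $\phi(\pi)_1$ records the first letter $\pi(1)$, while each subsequent coordinate $\phi(\pi)_i$ (for $2 \le i \le n-1$) records whether the $i$-th letter extends the current arc $[k,m]$ on the left (value $0$, giving $k-1$) or on the right (value $1$, giving $m+1$). Since $\rho_i$ acts by right multiplication $\pi \mapsto \pi\sigma_{i+1}$ when the result lies in $U_n$, the effect is to swap the values $\pi(i+1)$ and $\pi(i+2)$, and I must translate this swap of letters into its effect on the $\phi$-vector.

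\medskip

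First I would handle the generic case $1 \le i \le n-1$. Here right multiplication by $\sigma_{i+1}$ transposes the entries in positions $i+1$ and $i+2$. Provided $\pi\sigma_{i+1} \in U_n$ (the condition $\phi(\pi)_{i+1} \ne \phi(\pi)_{i+2}$, meaning one extension goes left and the other right), the set of letters occupying the first $j$ positions is unchanged for every $j \ne i+1$; only the first $i+1$ letters change, by swapping which of the two directions is taken first. Consequently the arc reached after each prefix is the same except at length $i+1$, and the directional choices at steps $i+1$ and $i+2$ are interchanged. This gives exactly $\phi(\rho_i(\pi))_j = \phi(\pi)_{\sigma_{i+1}(j)}$, since $\sigma_{i+1}$ swaps the indices $i+1$ and $i+2$ (in the $\phi$-indexing) and fixes all others. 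I would argue that when $\pi\sigma_{i+1} \notin U_n$, one has $\phi(\pi)_{i+1} = \phi(\pi)_{i+2}$, so $\sigma_{i+1}$ fixes the $\phi$-vector and the stated formula holds trivially because $\rho_i(\pi) = \pi$.

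\medskip

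Next I would treat the boundary generators $\rho_0$ and $\rho_{n-2}$, which always act (the condition is automatically satisfied for $i \in \{0, n-2\}$). For $\rho_{n-2}$, multiplication by $\sigma_{n-1}$ swaps the last two letters $\pi(n-1)$ and $\pi(n)$; since these are the two remaining letters completing the full arc $\bbz_n$, swapping them leaves all prefixes of length $\le n-2$ untouched and only toggles the direction recorded at step $n-1$, giving $\phi(\rho_{n-2}(\pi))_{n-1} = \phi(\pi)_{n-1}+1 \pmod 2$ with all other coordinates fixed. For $\rho_0$, multiplication by $\sigma_1$ swaps $\pi(1)$ and $\pi(2)$. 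The new first letter becomes the old second letter, which is $\pi(1)-1$ or $\pi(1)+1$ according to whether $\phi(\pi)_2 = 0$ or $1$; this yields the two cases for $j=1$. The old first letter now becomes the second, so the direction it represents relative to the new starting point flips, giving $\phi(\rho_0(\pi))_2 = \phi(\pi)_2 + 1 \pmod 2$; and all coordinates $j \ge 3$ are unaffected because the two-element set $\{\pi(1),\pi(2)\}$ is unchanged.

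\medskip

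The main obstacle I anticipate is the bookkeeping for $\rho_0$: one must carefully confirm that swapping the first two letters preserves the arc structure (so that $\rho_0(\pi)$ is genuinely in $U_n$ and $\phi$ is well-defined on it) and correctly verify the sign flip in the second coordinate, since here the \emph{reference point} of the arc-building process itself moves rather than merely a directional choice being interchanged. I would make this precise by writing $\{\pi(1),\pi(2)\} = \{k-1,k\}$ or $\{m,m+1\}$ depending on $\phi(\pi)_2$, and checking that reading this two-element arc from the other endpoint exactly reverses the recorded direction at step $2$ while leaving the underlying arc, and hence all later directional data, intact.
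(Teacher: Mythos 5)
Your proposal is correct: the paper states this Observation without proof (it is treated as a direct consequence of the definitions of $\phi$ and $\rho_i$), and your case-by-case verification—middle generators swap the two directional bits, the boundary generators $\rho_0$ and $\rho_{n-2}$ shift the starting point or toggle the last recorded direction—is precisely the routine check the paper leaves to the reader. Your handling of the degenerate case ($\phi(\pi)_{i+1}=\phi(\pi)_{i+2}$, where $\rho_i$ acts trivially and permuting equal coordinates fixes the vector) correctly closes the only point where a gap could arise.
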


\begin{proposition}\label{t.action}
The maps $\rho_i$, when extended multiplicatively, determine a well defined transitive 
$\widetilde C_{n-2}$-action on the set $U_n$ of arc permutations.
\end{proposition}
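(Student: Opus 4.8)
The plan is to invoke the universal (von Dyck) property of Coxeter groups: since $\widetilde{C}_{n-2}$ is presented by the generators $s_0,\dots,s_{n-2}$ subject to the relations \eqref{relation1}--\eqref{relation4} (with $n$ replaced by $n-2$), the assignment $s_i\mapsto\rho_i$ extends to a homomorphism $\widetilde{C}_{n-2}\to\mathrm{Sym}(U_n)$, hence to a well-defined action, as soon as one checks that the permutations $\rho_i$ satisfy those same relations. I would carry out this verification on the model $\bbz_n\times\bbz_2^{n-2}$, transporting each $\rho_i$ through the bijection $\phi$ and reading off its effect from Observation~\ref{action-on-vectors}. Writing a vector as $(a;x_2,\dots,x_{n-1})$ with $a\in\bbz_n$ and $x_j\in\bbz_2$, the generator $\rho_i$ for $1\le i\le n-3$ transposes the adjacent $\bbz_2$-coordinates $x_{i+1},x_{i+2}$; the generator $\rho_{n-2}$ flips $x_{n-1}$; and $\rho_0$ flips $x_2$ while shifting $a$ by $-1$ or $+1$ according to whether $x_2$ equals $0$ or $1$.

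From this description the ``easy'' relations are immediate. Each $\rho_i$ is visibly an involution, which gives \eqref{relation1} and in particular shows each $\rho_i$ is a bijection. The relations $(\rho_i\rho_j)^2=\mathrm{id}$ for $|i-j|>1$ follow because non-adjacent generators act on disjoint sets of coordinates (note $\rho_0$ touches only $a$ and $x_2$, hence is disjoint from every $\rho_j$ with $j\ge 2$), so they commute. The braid relations $(\rho_i\rho_{i+1})^3=\mathrm{id}$ for $1\le i\le n-4$ are exactly the standard relations among adjacent transpositions: the maps $\rho_1,\dots,\rho_{n-3}$ generate a copy of $S_{n-2}$ permuting the coordinates $x_2,\dots,x_{n-1}$. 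Together with $\rho_{n-2}$ these generate the hyperoctahedral group acting on $x_2,\dots,x_{n-1}$ by all permutations and all sign changes (each single coordinate is flipped by conjugating $\rho_{n-2}$ by a suitable permutation), a fact I will reuse for transitivity.

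The crux is the two order-$4$ relations in \eqref{relation4}. The relation $(\rho_{n-3}\rho_{n-2})^4=\mathrm{id}$ is the familiar type-$B$ interaction of an adjacent transposition with a sign change: on the pair $(x_{n-2},x_{n-1})$ one computes $(\rho_{n-3}\rho_{n-2})(p,q)=(\overline q,p)$, whose square is $(\overline p,\overline q)$ and whose fourth power is the identity. The genuinely new point is $(\rho_0\rho_1)^4=\mathrm{id}$, where the $\bbz_n$-component is coupled to the sign flips. Setting $w=\rho_0\rho_1$, a direct computation on $(a;x_2,x_3,\dots)$ gives $w^2(a;x_2,x_3,\dots)=(a+2x_2+2x_3-2;\,\overline{x_2},\overline{x_3},x_4,\dots)$; applying $w^2$ again replaces $x_2,x_3$ by $\overline{x_2},\overline{x_3}$ and adds $2\overline{x_2}+2\overline{x_3}-2=2-2x_2-2x_3$ to the first coordinate, so the two shifts cancel and $w^4$ is the identity. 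I expect this cancellation of the $\pm1$ shifts in $\bbz_n$ to be the main obstacle, since it is precisely the place where the affine (rather than finite hyperoctahedral) nature of the relation must be matched.

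Finally, for transitivity I would move an arbitrary $(a;x_2,\dots,x_{n-1})$ to the base point $(0;0,\dots,0)$. Using the hyperoctahedral subgroup generated by $\rho_1,\dots,\rho_{n-2}$ --- which fixes $a$ --- I first flip all the $\bbz_2$-coordinates to $0$, reaching $(a;0,\dots,0)$. Applying $\rho_0$ then yields $(a-1;1,0,\dots,0)$, and flipping $x_2$ back to $0$ by the hyperoctahedral subgroup (which again leaves $a$ unchanged) gives $(a-1;0,\dots,0)$. Iterating decreases $a$ step by step through all of $\bbz_n$, so $(0;0,\dots,0)$ lies in the orbit of every element, proving that the action is transitive.
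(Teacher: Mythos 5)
Your proposal is correct and takes essentially the same approach as the paper: the paper likewise establishes well-definedness by checking the Coxeter relations through Observation~\ref{action-on-vectors} (leaving those details, which you supply, to the reader), and proves transitivity by combining the action of the parabolic subgroup $\langle s_1,\dots,s_{n-2}\rangle \cong B_{n-2}$ on the $\bbz_2^{n-2}$-coordinates with $\rho_0$'s ability to move the $\bbz_n$-coordinate. Your explicit verification of the affine relation $(\rho_0\rho_1)^4=\mathrm{id}$, including the cancellation of the $\pm 1$ shifts, is the one computation the paper omits, and it is correct.
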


\begin{proof}
To prove that the operation is a $\tC_{n-2}$-action,
it suffices to show that it is consistent with the 
Coxeter relations defining $\widetilde{{C}}_{n-2}$ when the operator
$\rho_i$ is interpreted as an action of the generator $s_i$.
%The first set of relations is trivial, since
All relations may be easily verified using Observation~\ref{action-on-vectors};
we leave the details to the reader.

\smallskip

To prove that the action is transitive, notice first that 
$\rho_0(\pi)(1)=\pi(2)= {\pi(1)\pm 1 \pmod n}$. It thus suffices to prove that,
for every $1\le k\le n$, the maximal parabolic subgroup
$\langle s_1,\ldots,s_{n-2}\rangle$ of $\tC_{n-2}$ acts
transitively on the set $U_n^{(k)}:=\{\pi\in U_n:\ \pi(1)=k\}$.
Indeed, this parabolic subgroup is isomorphic to the classical
Weyl group $B_{n-2}$. By Observation~\ref{action-on-vectors}, the
restricted $B_{n-2}$-action on  $U_n^{(k)}$ may be identified with
the natural $B_{n-2}$-action on all subsets of %\\
$\{1,\ldots,n-2\}$, and is thus transitive.

\end{proof}

\section{A Graphic Hyperplane Arrangement}

\subsection{Real Hyperplane Arrangements}\label{P2}\ \\

Let $\AAA$ be an arrangement of finitely many linear hyperplanes
in $\RR^d$ that is {\it central} and {\it essential}, meaning that
$\cap_{H \in \AAA} H = \{0\}$.  Let $L=\sqcup_{i=0}^d L_i$ be
the corresponding graded poset of intersection subspaces,
ordered by reverse inclusion. $L$ is a geometric lattice.

Let $\CCC$ be the set of {\em chambers} of $\AAA$, namely 
the connected components of the complement $\RR^d \setminus \cup_{H \in \AAA} H$.
Define a graph structure $G_1(\AAA)$ on the set of vertices $\CCC$,
with two chambers $c, c' \in \CCC$ connected by an edge
if they are separated by exactly one hyperplane in $\AAA$.
It is well-known that the diameter of this graph is equal to 
the number of hyperplanes, $|L_1|=|\AAA|$.

\medskip

The {\em reflection arrangement} $\AAA_{n-1}$ of type $A_{n-1}$,
corresponding to the symmetric group $S_n$, has as ambient space 
%$\RR^{n-1}$, identified with 
%the quotient of $\RR^n$ having coordinates
%$x_1,\ldots,x_n$ by the subspace $x_1 = x_2 = \cdots= x_n$.  
the $(n-1)$-dimensional subspace
\[
W = \{\bar x = (x_1,\ldots,x_n) \in \RR^n\,|\,x_1 + \ldots + x_n = 0\}
\]
of $\RR^n$.
Its hyperplanes are $H_{ij}:=\{\bar x \in W\,|\,x_i = x_j\}$ for $1 \leq i < j \leq n$.
%Let $s_0$ be the chamber $\{\bar x\in \RR^{n-1}:\ x_1<x_2<\cdots<
%x_{n-1}$.
The chambers may be identified with permutations in $S_n$, via
\[
c_\pi:=\{\bar x\in W\,|\, x_{\pi(1)}<x_{\pi(2)}<\cdots<x_{\pi(n)}\}\qquad(\forall \pi \in S_n).
\]
The symmetric group $S_n$ acts on the chambers
via $\sigma_i(c_\pi):= c_{\pi \sigma_i}$, the unique chamber which
is separated from $c_\pi$ only by the hyperplane $H_{\pi(i),\pi(i+1)}$.
Then, for every $\pi\in S_n$, $c_\pi$ and $-c_\pi=c_{\pi w_0}$ are
antipodes in the graph $G_1(\AAA_{n-1})$, where $w_0 :=
[n,n-1,\ldots,1]$ is the longest element in $S_n$.

\medskip

A (simple undirected) graph $G=(V,E)$ of order $n$ consists of a
set $V=\{v_1,\dots,v_n\}$ of vertices and a set $E$ of edges,
which are unordered pairs of distinct vertices. 
%Given a (simple undirected) graph $G=(V,E)$, the 
The associated {\em graphic arrangement} $\AAA(G)$ is the hyperplane arrangement
in $W \cong \RR^{n-1}$ defined by
$$
\AAA(G):=\{H_{ij}\,|\,\{v_i,v_j\}\in E\} \subseteq \AAA_{n-1}.
$$
For example, if $K_n$ be the complete graph of order $n$ 
%, whose set of edges consists of all unordered pairs of vertices. The
then the associated graphic arrangement  $\AAA(K_n)$ is 
the whole reflection arrangement $\AAA_{n-1}$.  For more information
see~\cite{OT}.

%supersolvable

%\section{Triangulations and Chambers}\label{3}

\subsection{The Graph of Chambers $G_1(U'_n)$}\label{H1}\ \\
%{The modified hyperplane arrangement of type $A$}

%For every graph $G$ of order $n$,  the associated graphic
%arrangement $\AAA(G)$ has the reflection arrangement of type
%$\AAA_{n-1}$ as a refinement. 
%: each chamber of the graphic arrangement consists
%of a set of chambers of the reflection arrangement of type $A$.
%Thus, every graph $G$ of order $n$ induces a $G$-equivalence
%relation %$\sym_G$,
%on $S_n$: 
\begin{defn}
Let $G$ be a graph of order $n$.
Two permutations $\pi,\tau\in S_n$ are {\em $G$-equivalent} if
the points $(\pi(1),\dots,\pi(n)), (\tau(1),\dots,\tau(n))\in W$ 
lie in the same chamber of the associated graphic arrangement $\AAA(G)$.
\end{defn}

%\medskip

%Let the vertices in the complete graph $K_n$ be indexed by $1,\dots,n$ 
Index by $1,\ldots,n$ the vertices of the complete graph $K_n$,
and consider the graph $K'_n$ obtained by deleting the edges
%$\{(i, i+1):\ 1\le i<n\}\cup \{(1,n)\}$
$\{1,2\}$, $\{2,3\}$, $\ldots$, $\{n-1,n\}$ and $\{n,1\}$
from $K_n$. Let $\AAA'_{n-1}:=\AAA(K'_n)$ be
the associated graphic arrangement. Two permutations $\pi,\tau\in
S_n$ are $K'_n$-equivalent if and only if there exist permutations
$\pi=\pi_0, \pi_1,\dots, \pi_t=\tau$ such that, 
for every $0\le r\le t-1$, there exists $1\le i \le n-1$ such that 
$\pi_{r+1}=\pi_r \sigma_i$ and 
$\pi_r \sigma_i \pi_r^{-1}\in \{\sigma_j:\ 1\le j \le n-1\}\cup\{(1,n)\}$.
%, i.e., if the letters $j$ and $j+1$ (or $1$ and $n$) are
% adjacent in $\pi_r$, and $\pi_{r+1}$ is obtained from $\pi$ by switching their positions.
In other words, the $K'_n$-equivalence is the transitive closure
of the the following relation: there exist $1\le j<n$ such that
the letters $j$ and $j+1$, or $1$ and $n$, are
 adjacent in $\pi$, and $\tau$ is obtained from $\pi$ by switching their positions.

\begin{remark}\ \rm
Since $K'_n$ is invariant under the natural action of the dihedral
group $I_2(n)$, this group may be embedded in the automorphism
group of the graph $G_1(\AAA'_{n-1})$. Indeed, let $\gamma$ be the
cycle $(1,2,\dots,n)\in S_n$ and $w_0:=[n,n-1,\dots,1]$ the
longest element in $S_n$.
%$w_0:=(1,n)(2,n-1)\cdots$
%a product of $\lfloor {n\over 2}\rfloor$ transpositions. If $A$ is a $K'_n$-equivalent class the for every
If $A$ is a $K'_n$-equivalence class then for $0\le j<n$ and
$\epsilon\in \{0,1\}$, $w_0 \gamma^j A$ is a also
$K'_n$-equivalence class. Moreover, edges in $G_1(\AAA'_{n-1})$
are indexed by pairs of $K'_n$-equivalence classes, where for
every such a pair, $(A,B)$ is an edge in $G_1(\AAA'_{n-1})$ if and
only if $w_0^\epsilon \gamma^j (A,B)$ is an edge in
$G_1(\AAA'_{n-1})$.
\end{remark}

\medskip

%Recall that $U_n$ is the subset of arc permutations in $S_n$.

\begin{defn}
\begin{itemize}
\item[(i)] Define {\em $\tilde K'_n$-equivalence} on the subset of
arc permutations $U_n\subset S_n$ as the transitive closure of the
relation: there exist $1\le j<n$ such that the letters $j$ and
$j+1$, or $1$ and $n$, are
 adjacent in $\pi$, and $\tau$ is obtained from $\pi$ by switching their positions.

\item[(ii)]
 Let $U'_n$ be the set of $\tilde K'_n$-equivalence classes in $U_n$.

\item[(iii)] Let $G_1(U'_n)$ be the graph whose vertex set is
$U'_n$; two $\tilde K'_n$-equivalence classes in $U'_n$ are
adjacent in $G_1(U'_n)$ if they have representatives, whose
corresponding chambers in $G_1(\AAA_{n-1})$ lie in adjacent
chambers in $G_1(\AAA'_{n-1})$.
\end{itemize}
\end{defn}

\begin{observation}\label{classes}
For $n> 3$ all $\tilde K'_n$-equivalence classes in $U_n$ consist
of four permutations $\{\pi, \pi \sigma_1, \pi \sigma_{n-1}, \pi
\sigma_1 \sigma_{n-1}\}$.
\end{observation}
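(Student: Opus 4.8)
The plan is to understand precisely which arc permutations are $\tilde K'_n$-equivalent to a given $\pi \in U_n$, and to show this set always consists of exactly the four listed permutations. The first step is to recall the defining relation of $\tilde K'_n$-equivalence: we may switch the positions of two letters $a,b$ precisely when $\{a,b\}$ is one of the pairs $\{j,j+1\}$ (for $1 \le j < n$) or $\{1,n\}$, and $a,b$ occupy adjacent positions in $\pi$. Since $\pi \in U_n$ is an arc permutation, every initial segment $\{\pi(1),\dots,\pi(k)\}$ is an interval in $\bbz_n$; the key observation is that the only positions where such a switch can legally be performed, while keeping the permutation in $U_n$, are the first two positions and the last two positions.

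\medskip

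To make this precise I would argue as follows. Consider two adjacent positions $i,i+1$ in $\pi$ whose entries $\pi(i),\pi(i+1)$ are cyclically consecutive in $\bbz_n$ (i.e.\ differ by $1$ mod $n$, which is exactly the condition that $\{\pi(i),\pi(i+1)\}$ is an allowed switching pair). The arc permutation structure (Definition of $U_n$, together with the bijection $\phi$) forces $\pi(i+1)$ to be one of the two ``endpoints'' $k-1,m+1$ of the growing arc $[k,m]=\{\pi(1),\dots,\pi(i)\}$, except possibly at the extreme positions. A short case analysis shows that for an interior position $2 \le i \le n-2$, switching $\pi(i)$ and $\pi(i+1)$ destroys the arc property at step $i$ (the new initial segment $\{\pi(1),\dots,\pi(i-1),\pi(i+1)\}$ fails to be an interval), so such a switch takes $\pi$ outside $U_n$ and is therefore not an allowed move within the $\tilde K'_n$-equivalence on $U_n$. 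The only two switches that stay inside $U_n$ are the transposition of positions $1,2$ — which is right-multiplication by $\sigma_1$ — and the transposition of positions $n-1,n$ — which is right-multiplication by $\sigma_{n-1}$. I would verify using Observation~\ref{action-on-vectors} and the note following the definition of $\rho_i$ that $\pi\sigma_1,\pi\sigma_{n-1}\in U_n$ always hold (these are exactly the $i\in\{0,n-2\}$ cases where the condition $\phi(\pi)_{i+1}\ne\phi(\pi)_{i+2}$ is waived).

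\medskip

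Given this, the equivalence class is generated by the two commuting involutions $\sigma_1$ and $\sigma_{n-1}$ acting on the right. Since $\sigma_1$ and $\sigma_{n-1}$ commute (their supports $\{1,2\}$ and $\{n-1,n\}$ are disjoint as $n>3$), the subgroup $\langle \sigma_1,\sigma_{n-1}\rangle$ they generate has order $4$, and the orbit of $\pi$ is precisely $\{\pi,\pi\sigma_1,\pi\sigma_{n-1},\pi\sigma_1\sigma_{n-1}\}$. To conclude that the class has exactly four distinct elements I must check that these four are pairwise distinct, which is immediate since $\sigma_1,\sigma_{n-1},\sigma_1\sigma_{n-1}$ are all nontrivial in $S_n$ and hence move $\pi$ to distinct permutations.

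\medskip

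The main obstacle I anticipate is the interior-position case analysis: one must verify carefully that \emph{no} allowed switch in a middle position can ever keep the result in $U_n$, ruling out the possibility that a chain of moves leaves $U_n$ temporarily and re-enters it to reach a fifth arc permutation. Because $\tilde K'_n$-equivalence is defined as the transitive closure of moves \emph{within} $U_n$, it suffices to show that every single legal move from an arc permutation is either the $\sigma_1$ or $\sigma_{n-1}$ switch; the hypothesis $n>3$ is exactly what guarantees that positions $1,2$ and $n-1,n$ do not overlap and that the interior range $2\le i\le n-2$ behaves uniformly.
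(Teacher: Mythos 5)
Your proof is correct and takes essentially the same route as the paper's: show that a legal switch (letters differing by $1$ cyclically) at any interior position $2\le i\le n-2$ destroys the arc property and hence leaves $U_n$, while the switches at positions $(1,2)$ and $(n-1,n)$ always stay in $U_n$, so each class is the orbit of $\pi$ under the commuting involutions $\sigma_1,\sigma_{n-1}$, giving the four listed permutations. The paper's proof is a two-sentence version of exactly this case analysis (with an apparent typo in the interior range, written $1<i<n-2$), and your extra care about the transitive closure never leaving $U_n$ is a sound elaboration rather than a different approach.
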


\begin{proof}
For every $\pi\in U_n$ and $1<i<n-2$, if $\pi(i+1)=\pi(i)\pm 1$
then $\pi \sigma_i\not\in U_n$. On the other hand, for every
$\pi\in U_n$ and $i\in\{1,n-1\}$,  $\pi \sigma_i\in U_n$.
\end{proof}

\medskip

Note that, by definition, two  $\tilde K'_n$-equivalent arc
permutations are $K'_n$-equivalent in $S_n$; hence, they lie in
same chamber in $G_1(\AAA'_{n-1})$. One concludes that $G_1(U'_n)$
contains no loops.

% induced by the chambers in $U'_n$. %, where the edge between two adjacent
%chambers is labeled by the separating hyperplane.

\medskip

\begin{example}\ \rm
\begin{itemize}
\item[(a)] For $n=4$ there are four $K'_4$-equivalence classes in
$S_4$:

\smallskip

\noindent ${\bf 1234}=\{[1234], [1324], [2134], [1243], [2143],
[2413]\}$ and its images under cyclic rotations $\gamma^j {\bf
1234}$, $0\le j<4$.

\smallskip

\noindent The edges in the graph $G_1(\AAA'_3)$ are all cyclic
rotations of $({\bf 1234},{\bf 2341})$, thus the graph is a
$4$-cycle. Since, each $K'_4$-equivalence class contains one
$\tilde K'_4$-class in $U'_4$, the graphs $G_1(\AAA'_3)$ and
$G_1(U'_4)$ are identical.

\medskip

\item[(b)] For $n=5$ there are three types of $K'_4$-equivalence
classes in $S_5$:

\smallskip

\noindent ${\bf 12345}=\{[12345], [13245], [21345], [12435],
[21435], [24135], [13254], [21354]\}$ and  its ten images under
dihedral group action $w_0\gamma^j {\bf 12345}$, $0\le j<5$,
$\epsilon\in \{0,1\}$;

\noindent ${\bf 13452}=\{[13452], [14352], [13542]\}$ and  its ten
images under dihedral group action;

\noindent ${\bf 13524}=\{[13524]\}$ and  its ten images  under
dihedral group action.

\smallskip

\noindent The edges in the graph $G_1(\AAA'_4)$ are \\ $\{({\bf
12345},{\bf 32154}), ({\bf 12345}, {\bf 13524}), ({\bf 12345},{\bf
24135}), ({\bf 12345},{\bf 14325}),$
\\ $ ({\bf 13524},{\bf 13452}), ({\bf
13524},{\bf 35124}), ({\bf 13452},{\bf 14325})\}$ and their images
under the dihedral group action.

There are ten $\tilde K'_4$-equivalence classes in $U_4$, each
contained in one of the images under the dihedral group action of
${\bf 12345}$.
%The only $K'_5$-equivalence classes, which have representatives in
%$U'_4$ are ${\bf 12345}$ and its ten .
Thus the graph $G_1(U'_5)$ is a $10$-cycle.

\end{itemize}

\end{example}

\section{Stanley's Conjecture}

It was conjectured by Richard Stanley~\cite{St1} that
all diagonals are flipped %exactly once
in a geodesic between two antipodes in the flip graph $\Gamma_n$
of colored triangle-free triangulations. A bijection between the
set of triangle-free triangulations in $CTFT(n)$ and the subset
$U'_{n}$ of chambers in the graphic hyperplane
arrangement $\AAA(K'_{n})$, %(presented in Section~\ref{H1}),
which preserves the underlying graph structure, is applied to
prove Stanley's conjecture.

%(see Theorem~\ref{main} below), and applied to prove the mentioned
%above conjecture of Richard Stanley.

\begin{theorem}\label{main111}
The %edge-labeled
flip graph $\Gamma_n$ (without edge labeling) is isomorphic to the
 graph of chambers $G_1(U'_{n})$.
\end{theorem}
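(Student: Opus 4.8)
The plan is to construct an explicit bijection between the vertex sets of the two graphs and then verify that it carries edges to edges. The natural candidate is dictated by the group actions already in hand: Proposition~\ref{t.action1} gives a transitive $\tC_{n-4}$-action on $CTFT(n)$, while Proposition~\ref{t.action} gives a transitive $\tC_{n-2}$-action on $U_n$, and Observation~\ref{classes} shows that the $\tilde K'_n$-equivalence classes comprising $U'_n$ are exactly the orbits of the parabolic subgroup $\langle s_0, s_{n-2}\rangle$ (acting via $\rho_0, \rho_{n-2}$, i.e.\ by $\sigma_1$ and $\sigma_{n-1}$). Reindexing $\tC_{n-4}$ inside $\tC_{n-2}$, I would match the generator $s_i$ acting on colored triangulations (flipping the chord labeled $i$) with the generator acting on arc permutation classes for $0 \le i \le n-4$, and use the transitivity of both actions to transport a chosen base triangulation $T_0 \in CTFT(n)$ to a chosen base class in $U'_n$. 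Concretely, I would fix a base point on each side and define the bijection by $w \cdot T_0 \mapsto w \cdot [\pi_0]$ for group elements $w$, after checking this is well defined on orbits.

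The key steps, in order, are as follows. First I would identify the group acting on $U'_n = U_n / \tilde K'_n$: since $\tilde K'_n$-equivalence is generated by the switches corresponding to $\sigma_1$ and $\sigma_{n-1}$ (equivalently $\rho_0$ and $\rho_{n-2}$ on the $\phi$-coordinates), the quotient $U'_n$ carries the residual action of the subgroup of $\tC_{n-2}$ generated by $s_1, \ldots, s_{n-3}$, which is isomorphic to $\tC_{n-4}$. Second, I would set up the bijection on vertices using transitivity on both sides together with a count argument: by the Claim, $|U_n| = n 2^{n-2}$, and Observation~\ref{classes} collapses each class to four permutations, so $|U'_n| = n 2^{n-4}$, which I would verify equals $|CTFT(n)|$ (this matching of cardinalities makes the candidate map automatically bijective once surjectivity or injectivity is established). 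Third, I would check well-definedness: an element of the stabilizer of $T_0$ in the $\tC_{n-4}$-action must be shown to stabilize the corresponding class $[\pi_0]$, which reduces to comparing the two actions generator-by-generator via Observation~\ref{action-on-vectors}. Finally, I would verify that adjacency is preserved in both directions: an edge in $\Gamma_n$ is a single flip of some chord $i$, i.e.\ application of a generator $s_i$ that genuinely moves $T$, and I would show this corresponds exactly to the residual generator moving $[\pi]$ to an adjacent class in $G_1(U'_n)$, which by the definition of $G_1(U'_n)$ means passing to an adjacent chamber across a single hyperplane of $\AAA'_{n-1}$.

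The main obstacle I expect is the well-definedness and edge-preservation at the \emph{boundary generators} $s_0$ and $s_{n-4}$ of the triangulation action. Interior generators $s_1, \ldots, s_{n-5}$ act as honest adjacent transpositions on both sides and should match transparently, but the two ends of the Dynkin diagram of $\tC_{n-4}$ involve the order-$4$ braid relation~(\ref{relation4}), and on the arc-permutation side these correspond to the coordinates where $\rho_0$ acts by a cyclic shift on $\phi(\pi)_1 \in \bbz_n$ (not a mere transposition), per the first cases of Observation~\ref{action-on-vectors}. Reconciling the $\bbz_n$-valued first coordinate with the way the short chord labeled $0$ rotates under flips — and confirming that the quotient by $\tilde K'_n$-equivalence exactly absorbs the discrepancy so that the residual action is a clean $\tC_{n-4}$ — is the delicate point. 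I would handle it by tracking how a flip of the chord labeled $0$ changes the pair of short chords in $T$ and matching this against the rotation of $\phi(\pi)_1$, using the characterization that a triangulation is triangle-free iff it has exactly two short chords.
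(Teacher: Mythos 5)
Your high-level plan (transport of structure through two transitive group actions, plus a generator-by-generator equivariance check) is in the same spirit as the paper's proof, which constructs an explicit bijection $f:CTFT(n)\to U'_n$ and verifies $f(s_iT)=\theta_i(f(T))$ case by case. But your proposal has a genuine gap at its foundation: the claim that $U'_n$ ``carries the residual action of the subgroup of $\tC_{n-2}$ generated by $s_1,\ldots,s_{n-3}$, which is isomorphic to $\tC_{n-4}$'' is wrong on two counts. First, that parabolic subgroup is not $\tC_{n-4}$: in the Coxeter diagram of $\tC_{n-2}$ the two edges labeled $4$ are $(s_0,s_1)$ and $(s_{n-3},s_{n-2})$, so deleting $s_0$ and $s_{n-2}$ leaves a path whose edges are all labeled $3$, i.e.\ a \emph{finite} group of type $A_{n-3}$, isomorphic to $S_{n-2}$. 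Second, and more fatally, the $\rho$-action does not descend to $\tilde K'_n$-classes at all. Take $\pi=[1,2,3,\ldots,n]$ and its class-mate $\pi'=\pi\sigma_1=[2,1,3,\ldots,n]$. Then $\pi\sigma_2=[1,3,2,4,\ldots,n]\notin U_n$, so $\rho_1(\pi)=\pi$, whereas $\rho_1(\pi')=\pi'\sigma_2=[2,3,1,4,\ldots,n]\in U_n$; the outputs $[1,2,3,\ldots,n]$ and $[2,3,1,4,\ldots,n]$ lie in \emph{different} $\tilde K'_n$-classes (their third letters differ). So ``apply $\rho_i$ to any representative'' is not a well-defined map on $U'_n$, and there is no quotient action to restrict.

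This is exactly why the paper does not quotient the $\tC_{n-2}$-action, but instead defines new operators $\theta_0,\ldots,\theta_{n-4}$ directly on the subset-series representation of a class, with special cyclic rules at the two ends; it is these class-level end rules --- not anything inherited from $\langle s_1,\ldots,s_{n-3}\rangle$ --- that produce the order-$4$ relations and hence a genuine $\tC_{n-4}$-action on $U'_n$. Your closing remark (matching the rotation of the short chord labeled $0$ against the rotation of $\phi(\pi)_1$) points at the right phenomenon, but carrying it out amounts to constructing the paper's map $f$ (chord labeled $i$ $\leftrightarrow$ the $(i+1)$-st subset of the series) and checking equivariance generator by generator, including both boundary generators; that is the actual content of the proof and it is absent from your outline. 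For what it is worth, the rest of your scaffolding is sound: once the correct $\tC_{n-4}$-action on $U'_n$ is in place, your cardinality count $|CTFT(n)|=n2^{n-4}=|U_n|/4=|U'_n|$, the stabilizer comparison, and Observation~\ref{adjacent-chambers} would indeed finish the argument. The gap is entirely in how the action on $U'_n$ is obtained.
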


Furthermore,

\begin{theorem}\label{main112}
There exists an edge-orientation of the flip graph $\Gamma_n$
such that, for any oriented edge of adjacent triangulations $(T,
S)$, $S$ is obtained from $T$ by flipping the diagonal $[i,j]$
%(where $i<j$)
if and only if the corresponding chambers are separated by the
hyperplane $x_i=x_{j}$.
\end{theorem}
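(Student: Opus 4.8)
The plan is to refine the graph isomorphism of Theorem~\ref{main111} into a correspondence that, beyond matching vertices and edges, records for each edge which hyperplane separates the two chambers, and then to read off the required orientation. Recall from the proof of Theorem~\ref{main111} the bijection $\beta\colon CTFT(n)\to U'_n$ sending a colored triangulation $T$ to the $\tilde K'_n$-class of an arc permutation $\pi_T$; by its construction it carries the flip of the chord labeled $c$ ($0\le c\le n-4$) to the move $\rho_{c+1}$ on $U'_n$. This index matching is forced by counting: the generators $\rho_0=(\,\cdot\,)\sigma_1$ and $\rho_{n-2}=(\,\cdot\,)\sigma_{n-1}$ permute each $4$-element class internally, hence act trivially on $U'_n$ by Observation~\ref{classes}, leaving exactly the $n-3$ generators $\rho_1,\dots,\rho_{n-3}$ to match the $n-3$ chord labels $0,\dots,n-4$. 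The first step is then to make this quantitative: since $\rho_{c+1}=(\,\cdot\,)\sigma_{c+2}$ swaps the letters in positions $c+2$ and $c+3$, the chamber description of $\AAA_{n-1}$ shows that the corresponding edge of $G_1(U'_n)$ crosses exactly the hyperplane $H_{\pi(c+2),\pi(c+3)}$.

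The second, geometric, step is to identify these two letters with the endpoints of a diagonal of the flip quadrangle. Here I would make $\beta$ fully explicit: a triangle-free triangulation is a \emph{snake} (its dual tree is a path, having exactly two ears, namely the two short chords), so peeling off its triangles one at a time, starting from the ear carrying the short chord labeled $0$, reads the polygon vertices $0,\dots,n-1\in\bbz_n$ in the order in which they enter the growing arc; this reading is a representative of $\pi_T$, with the left/right choices recorded by $\phi(\pi_T)$ matching the turn sequence of the snake. Using this description I would show that the two letters $\pi_T(c+2),\pi_T(c+3)$ swapped by the flip of the chord labeled $c$ are precisely the endpoints of one of the two diagonals of that flip's quadrangle; under the identification $0\equiv n$ of polygon vertices with coordinate indices (the same one used in the definition of arc permutations), the separating hyperplane $H_{\pi(c+2),\pi(c+3)}$ is then literally $x_i=x_j$ for $[i,j]$ equal to that diagonal.

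Finally I would define the orientation edge by edge, with no global compatibility required. For an edge whose separating hyperplane is $H_{ij}$, the previous step guarantees $\{i,j\}$ is the endpoint set of exactly one of the two quadrangle diagonals; I orient the edge between the two triangulations so that the diagonal named in the statement is $[i,j]$, i.e.\ so that the flip producing the oriented pair $(T,S)$ is the flip of $[i,j]$. Because exactly one of the two diagonals matches the hyperplane, this rule is unambiguous and directly yields the biconditional.

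The main obstacle is the geometric identification of the second step. Two points need care. First, one must pin down $\beta$ sharply enough to read the endpoints of chords as permutation letters, and verify on a base triangulation (for instance a fan) that the matching $s_c\leftrightarrow\rho_{c+1}$ holds with the correct direction, the general case then following by equivariance under the $\widetilde{C}_{n-4}$- and $\widetilde{C}_{n-2}$-actions of Propositions~\ref{t.action1} and~\ref{t.action}. Second, one must check that $H_{\pi(c+2),\pi(c+3)}$ is independent of the chosen representative of the $4$-element $\tilde K'_n$-class: for interior labels the positions $c+2,c+3$ are untouched by $\sigma_1$ and $\sigma_{n-1}$, but the extreme labels $c=0$ and $c=n-4$ (the short chords), whose positions abut positions $2$ and $n-1$, require a separate verification that the crossed hyperplane is still well defined.
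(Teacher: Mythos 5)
Your proposal is correct and follows essentially the same route as the paper: both pass through the bijection of Theorem~\ref{main111}, compute the separating hyperplane of an edge as the transposition of the two letters swapped in a suitably chosen representative of the arc-permutation class, and identify those letters with the endpoints of the erased diagonal --- the same interior-versus-extreme-label case analysis carried out in the paper's proof of Theorem~\ref{main}. The only organizational difference is that the paper fixes an explicit global orientation in advance via the encoding $\varphi$ (Definition~\ref{orient}) and then verifies the matching, whereas you define the orientation edge-by-edge from the matching itself; since the two diagonals of a flip quadrangle have disjoint endpoint sets, these two orientations in fact coincide.
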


An affirmative answer to Stanley's conjecture follows.

\begin{corollary}\label{RS}
For every colored triangle-free triangulation  $T\in CTFT(n)$,
every diagonal is flipped exactly once along the shortest path
from $T$ to same triangulation with reversed coloring $T^R$.
\end{corollary}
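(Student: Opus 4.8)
The plan is to deduce the corollary by transporting the combinatorics of geodesics in $\Gamma_n$ into the geometry of minimal galleries in the graphic arrangement $\AAA(K'_n)$, using the isomorphism of Theorem~\ref{main111} and the hyperplane dictionary of Theorem~\ref{main112}, and then invoking the standard theory of chamber graphs of central arrangements. The two theorems do the translation; the arrangement theory and a counting coincidence do the rest.

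First I would record the numerical coincidence on which everything rests. The number of diagonals of the convex $n$-gon $P_n$ is $n(n-3)/2$, while the number of hyperplanes of $\AAA(K'_n)$ equals the number of edges of $K'_n$, namely $\binom{n}{2}-n = n(n-3)/2$; thus diagonals and hyperplanes are equinumerous. By Theorem~\ref{main112} the assignment sending the diagonal $[i,j]$ to the hyperplane $x_i=x_j$ is a bijection between these two equinumerous sets, and each edge of $\Gamma_n$ corresponds, under this dictionary, to crossing exactly one hyperplane.

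Next I would fix $T\in CTFT(n)$, let $c$ be the chamber corresponding to $T$ under Theorem~\ref{main111}, and let $c'$ be the chamber corresponding to $T^R$. A shortest path from $T$ to $T^R$ in $\Gamma_n$ then becomes a geodesic from $c$ to $c'$ in the chamber graph, i.e. a minimal gallery in $\AAA(K'_n)$. By Theorem~\ref{antipodes} this geodesic has length exactly $n(n-3)/2$. Now I would invoke the classical fact that in a central hyperplane arrangement the chamber-graph distance between two chambers equals the number of hyperplanes separating them, and that a minimal gallery crosses each separating hyperplane exactly once and crosses no other hyperplane. Since the gallery length $n(n-3)/2$ equals the \emph{total} number of hyperplanes, every hyperplane must separate $c$ from $c'$ — that is, $c'=-c$ is the antipode — and consequently each hyperplane is crossed exactly once along the gallery. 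Translating back through the bijection of the first step, each diagonal of $P_n$ is flipped exactly once along the geodesic from $T$ to $T^R$, which is precisely the assertion.

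The main obstacle I anticipate is the careful passage between the graph $G_1(U'_n)$, whose vertices are $\tilde K'_n$-equivalence classes of arc permutations, and the genuine chamber graph $G_1(\AAA'_{n-1})$, so that the ``distance equals number of separating hyperplanes'' principle may legitimately be applied; this requires checking that the class-to-chamber map is compatible with adjacency and single-hyperplane crossings, as encoded in Observation~\ref{classes} and Theorem~\ref{main112}. A safe alternative, which avoids the identification $c'=-c$ altogether, is a parity argument: each edge of a path crosses exactly one hyperplane, the parity of the number of crossings of a fixed hyperplane along any $c$-to-$c'$ path is an invariant (even or odd according to whether that hyperplane separates $c$ from $c'$), and one checks directly that $T^R$ lies on the opposite side of every hyperplane from $T$; then each of the $n(n-3)/2$ hyperplanes is crossed an odd, hence positive, number of times along a geodesic of total length $n(n-3)/2$, forcing every count to equal one.
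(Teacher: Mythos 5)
Your main argument has a genuine logical gap at the step ``Since the gallery length $n(n-3)/2$ equals the total number of hyperplanes, every hyperplane must separate $c$ from $c'$.'' The classical fact you invoke --- that a minimal gallery crosses exactly the separating hyperplanes, once each --- applies to galleries that are minimal in the \emph{full} chamber graph $G_1(\AAA'_{n-1})$. But a geodesic in $\Gamma_n \cong G_1(U'_n)$ is only known to be minimal in the subgraph whose vertices are chambers containing arc permutations; a priori it could be a non-minimal gallery in the full arrangement of length $n(n-3)/2$ that crosses some hyperplanes twice (back and forth) and misses others entirely. The length count alone gives only that the number of separating hyperplanes is at most $n(n-3)/2$, not that it equals $n(n-3)/2$. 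So the antipodality $c'=-c$ cannot be \emph{deduced} from the length coincidence; it must be established independently, and this is exactly what the paper does first (Proposition~\ref{reverse-2}): by Observation~\ref{reverse}, reversing the coloring corresponds under $f$ to reversing the sequence of subsets, i.e.\ to right multiplication of a representing arc permutation $\pi$ by $w_0$, and $c_{\pi w_0}=-c_{\pi}$; hence \emph{every} hyperplane separates, and only then does the pigeonhole argument (distance equals number of hyperplanes, by Theorems~\ref{diameter} and~\ref{antipodes}) close the proof.

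Your fallback parity argument has the right structure --- it is essentially the paper's proof: parity forces each separating hyperplane to be crossed an odd number of times, hence at least once, and the length coincidence then forces exactly once. But it hinges on the clause ``one checks directly that $T^R$ lies on the opposite side of every hyperplane from $T$,'' which is precisely the antipodality statement $c_{f(T^R)}=-c_{f(T)}$ that your main argument tried (invalidly) to derive, and which you never actually verify. Note also that this clause contradicts your claim that the fallback ``avoids the identification $c'=-c$ altogether'': for a central essential arrangement, lying on opposite sides of every hyperplane \emph{is} that identification. Supplying the check --- via the fact that $f(T^R)$ is $f(T)$ with the order of the subsets reversed, so that $T^R$ is represented by $\pi w_0$ and all pairwise order relations of letters are inverted --- would turn your fallback into a complete proof coinciding with the paper's.
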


%The bijection is described and studied in Section~\ref{3}, and
%applied to prove Corollary~\ref{RS} in Section~\ref{4}.

%In the next section an oriented colored flip graph is introduced
%and used later to prove
% Theorems~\ref{main111} and~\ref{main112} will be proved in the next two sections.
 % (see Theorem~\ref{main} below).

\section{Proof of Theorem~\ref{main111}}

\subsection{A $\widetilde C_{n-4}$-Action on $U'_n$}\label{action-on-chambers}\ \\

For every $\pi\in U_n$ denote the $K'_n$-class of $\pi$ in $U_n$
by $\bar\pi$. By Observation~\ref{classes}, for every $\pi\in
U_n$, $\bar\pi\in U'_n$ may be represented by a series of $n-2$
subsets: $\{\pi(1),\pi(2)\},
\{\pi(3)\},\dots,\{\pi(n-2)\},\{\pi(n-1),\pi(n)\}$, where all
subsets except of the first and the last are singletons.

%Consider the following action of the symmetric group
%on such series of subsets: for $1<i<n-3$
For every simple reflection $\sigma_i\in S_{n-2}$, $1<i<n-3$, let
 $\bar\pi \sigma_i$ be the series of subsets obtained
from $\bar\pi$ by replacing the letters in the $i$-th and $i+1$-st
subsets. Let $\bar\pi \sigma_1$ be obtained from $\bar\pi$ by
replacing letters in the first two subsets as follows: if
$\{\pi(1),\pi(2)\}=\{\pi(3)-2, \pi(3)-1\}$ then the first two
subsets in $\bar\pi \sigma_1$ are $\{\pi(3),\pi(3)-1\},
\{\pi(3)-2\}$; $\{\pi(1),\pi(2)\}=\{\pi(3)+1, \pi(3)+2\}$ then the
first two subsets in $\bar\pi \sigma_1$ are
 $\{\pi(3),\pi(3)+1\}, \{\pi(3)+2\}$. Similarly, $\bar\pi \sigma_{n-3}$ is
obtained from $\bar\pi$ by replacing the letter in the $n-3$-rd
subset with $\pi(n-2)-2$ if $\{\pi(n-1),\pi(n)\}=\{\pi(n-2)-2,
\pi(n-2)-1\}$ and with $\pi(n+2)$ otherwise.

For every $0\le i\le n$ let $\theta_i:U'_n\mapsto U'_n $ be
$$
\theta_i(\bar\pi)=\begin{cases} \bar\pi \sigma_{i+1},&\ {\rm{if}}\ \bar\pi \sigma_{i+1}\in U'_n,\\
\bar\pi, &\ {\rm{if}}\  \bar\pi \sigma_{i+1}\not\in U'_n. \\
\end{cases} \qquad(\forall \bar\pi\in U'_n)
$$

\begin{observation}
The maps $\theta_i$, $(0\le i\le n-4)$, when extended
multiplicatively, determine a well defined transitive $\widetilde C_{n-4}$-action on $U'_n$.
\end{observation}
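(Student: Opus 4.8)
\section*{Proof proposal}

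The plan is to reduce the statement to the already-established Proposition~\ref{t.action} by coordinatizing $U'_n$ in exact analogy with the bijection $\phi$. By Observation~\ref{classes} and the discussion preceding the definition of $\theta_i$, every class $\bar\pi\in U'_n$ is faithfully encoded by its series of subsets $\{\pi(1),\pi(2)\},\{\pi(3)\},\dots,\{\pi(n-2)\},\{\pi(n-1),\pi(n)\}$: since $\pi$ is an arc permutation, the first subset is a $2$-interval $\{a,a+1\}\subseteq\bbz_n$, the last is the complementary (hence forced) $2$-interval, and the middle singletons record left/right choices exactly as in $\phi$. First I would define $\psi\colon U'_n\to\bbz_n\times\bbz_2^{\,n-4}$ by $\psi_1(\bar\pi):=a$ and $\psi_j(\bar\pi):=\phi(\pi)_j$ for $3\le j\le n-2$. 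These are well defined on the class because $\{\pi(1),\pi(2)\}$ is a class invariant, while Observation~\ref{action-on-vectors} shows that $\phi(\pi)_j$ for $3\le j\le n-2$ is unchanged under right multiplication by $\sigma_1$ and by $\sigma_{n-1}$. The count $|U'_n|=n2^{\,n-4}$ confirms that $\psi$ is a bijection. For bookkeeping I would relabel the surviving binary coordinates as $\beta_m:=\psi_{m+2}$ for $1\le m\le n-4$.

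Next I would translate each $\theta_i$ into these coordinates and verify that it acts by a formula of the same shape as the one for $\rho_i$ in Observation~\ref{action-on-vectors}, but with two fewer binary coordinates. Unwinding the definition of $\theta_0$ shows it sends $\psi_1\mapsto\psi_1\mp1$, with the sign governed by $\beta_1$, and flips $\beta_1$ --- precisely the rule for $\rho_0$ on $(\phi_1,\phi_2)$; the maps $\theta_i$ for $1\le i\le n-5$ transpose the adjacent coordinates $\beta_i,\beta_{i+1}$ (acting as the identity exactly when these coincide, hence as the honest transposition on coordinate vectors), mirroring the $\rho_i$; and $\theta_{n-4}$ flips the last coordinate $\beta_{n-4}$, playing the role of the end sign-change generator $s_{n-4}$. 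Because $\theta_i$ is defined directly on the subset series, equivalently on $\psi$, its independence of the chosen representative --- i.e.\ its well-definedness as a map $U'_n\to U'_n$ --- is automatic.

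With this coordinate dictionary, consistency with the Coxeter relations of $\tC_{n-4}$ (interpreting $\theta_i$ as $s_i$) becomes the verbatim analogue of the verification behind Proposition~\ref{t.action}: far-apart generators commute because they move disjoint coordinates, the interior braid relations $(\theta_i\theta_{i+1})^3=1$ are the symmetric-group relations on the $\beta$'s, and the two order-$4$ relations $(\theta_0\theta_1)^4=1$ and $(\theta_{n-5}\theta_{n-4})^4=1$ arise from the two doubled endpoints --- the $2$-element first and last subsets --- exactly as the type-$C$ bonds arose from $\rho_0$ and $\rho_{n-2}$ before. I expect the main point to nail down here to be these two end relations: confirming that \emph{both} doubled endpoints genuinely produce order-$4$ bonds, so that the underlying Coxeter diagram is $\tC_{n-4}$ with two marked ends and not a type-$A$ degeneration. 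This reduces to a short explicit computation on $(\psi_1,\beta_1,\beta_2)$ and on $(\beta_{n-5},\beta_{n-4})$, which is mechanical but is where the essential content sits.

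Finally, transitivity follows exactly as in Proposition~\ref{t.action}. Since $\theta_1,\dots,\theta_{n-4}$ fix $\psi_1$, the parabolic subgroup $\langle s_1,\dots,s_{n-4}\rangle\cong B_{n-4}$ acts on each fiber $\{\bar\pi:\psi_1(\bar\pi)=k\}\cong\bbz_2^{\,n-4}$ as the natural $B_{n-4}$-action on all subsets of $\{1,\dots,n-4\}$, hence transitively. As $\theta_0$ moves between the fibers $\psi_1=k$ and $\psi_1=k\pm1$ --- the direction being selectable by first adjusting $\beta_1$ inside the fiber --- the group $\langle s_0,\dots,s_{n-4}\rangle$ connects all $n$ fibers, giving a transitive action on $U'_n$ and completing the proof.
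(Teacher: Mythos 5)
Your proposal is correct, and it is essentially the paper's intended argument: the paper omits the proof, stating only that it is ``similar to the proof of Proposition~\ref{t.action}'', and your coordinatization $\psi$ of $U'_n$ by $\bbz_n\times\bbz_2^{n-4}$, the verification of the Coxeter relations (including the two order-$4$ end bonds) in these coordinates, and the fiber-plus-$B_{n-4}$ transitivity argument are exactly the adaptation of that proof that the authors had in mind. Nothing in your write-up deviates in substance from the paper's route; you have simply supplied the omitted details.
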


Proof is similar to the proof of Observation~\ref{t.action} and is
omitted.

\begin{observation}\label{adjacent-chambers}
Two chambers in $\bar\pi,\bar\tau\in U'_n$ are adjacent in
$G_1(U'_n)$ if and only if there exist $0\le i\le n-4$, such that
$\theta_i(\bar\pi)=\bar\tau$.
\end{observation}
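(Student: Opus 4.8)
The plan is to describe the edges of $G_1(U'_n)$ explicitly in terms of arc permutations and then match them one for one with the nontrivial actions of $\theta_0,\dots,\theta_{n-4}$. By the description of $K'_n$-equivalence given above, two chambers $c_{\pi'},c_{\tau'}$ of $\AAA_{n-1}$ lie in adjacent chambers of $\AAA'_{n-1}$ precisely when $\tau'$ is obtained from $\pi'$ by transposing two letters $a,b$ occupying adjacent positions and satisfying $\{a,b\}\in E(K'_n)$, i.e. $b\not\equiv a\pm1\pmod n$; this transposition is the crossing of the single hyperplane $H_{ab}$. Hence $\bar\pi,\bar\tau\in U'_n$ are adjacent in $G_1(U'_n)$ iff some arc representative of $\bar\pi$ is carried to an arc representative of $\bar\tau$ by one position-adjacent transposition of a non-cyclically-consecutive pair. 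Using the block representation $\{\pi(1),\pi(2)\},\{\pi(3)\},\dots,\{\pi(n-2)\},\{\pi(n-1),\pi(n)\}$ from Observation~\ref{classes}, I note first that such a transposition can never sit inside one of the two end-blocks: the letters $\pi(1),\pi(2)$ form an interval of size two and so do $\pi(n-1),\pi(n)$, so a swap within an end-block is a swap of cyclically consecutive letters (a non-edge of $K'_n$) that leaves the $\tilde K'_n$-class unchanged. Thus every edge of $G_1(U'_n)$ is realized by a transposition at a boundary between two consecutive blocks, and the goal reduces to showing that the legal block-boundary swaps are exactly the nontrivial applications of the $\theta_i$.

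For the interior indices $i$ with $1\le i\le n-5$, the generator $\theta_i$ swaps the two singleton blocks $\{\pi(i+2)\},\{\pi(i+3)\}$, which on a representative is the transposition $\pi\mapsto\pi\sigma_{i+2}$ of the adjacent positions $i+2,i+3$. By the membership criterion stated after the definition of $\rho_i$, this stays in $U_n$ iff $\phi(\pi)_{i+2}\ne\phi(\pi)_{i+3}$; by the meaning of $\phi$ (each coordinate records whether a letter extends the current interval on the left or on the right), this inequality holds iff $\pi(i+2)$ and $\pi(i+3)$ were appended at opposite ends, equivalently $\pi(i+3)\not\equiv\pi(i+2)\pm1\pmod n$, i.e. $\{\pi(i+2),\pi(i+3)\}\in E(K'_n)$. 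Therefore $\theta_i(\bar\pi)\ne\bar\pi$ holds exactly when the corresponding single-hyperplane crossing $H_{\pi(i+2),\pi(i+3)}$ is legal, and in that case $\theta_i(\bar\pi)=\bar\tau$ is precisely the adjacent class; the $\phi$-coordinate swap of Observation~\ref{action-on-vectors} shows that the interior block data, hence the class, genuinely changes. This settles the equivalence for all interior indices.

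The two boundary generators $\theta_0$ and $\theta_{n-4}$, which involve a pair-block, are the main obstacle, since swapping a two-element block past a singleton could a priori cross two hyperplanes. Here I exploit the freedom to choose the order inside the end-blocks (the four-element structure of Observation~\ref{classes}). For $\theta_0$, in the case $\{\pi(1),\pi(2)\}=\{\pi(3)-2,\pi(3)-1\}$ I take the representative $[\,\pi(3)-1,\ \pi(3)-2,\ \pi(3),\ \dots]$ of $\bar\pi$ and swap positions $2$ and $3$: the two letters $\pi(3)-2$ and $\pi(3)$ differ by $2$, so this is a single crossing of $H_{\pi(3)-2,\pi(3)}$, the result is again an arc permutation, and by the definition of $\bar\pi\sigma_1$ in Subsection~\ref{action-on-chambers} it represents $\theta_0(\bar\pi)$. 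The case $\{\pi(1),\pi(2)\}=\{\pi(3)+1,\pi(3)+2\}$ is handled symmetrically, and $\theta_{n-4}$ follows from $\theta_0$ under the dihedral symmetry $w_0\gamma^j$ recorded in the Remark of Subsection~\ref{H1}. In every case exactly one $K'_n$-hyperplane is crossed and no class is fixed, so each of $\theta_0,\theta_{n-4}$ always yields an edge.

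The converse is then immediate. If $\bar\pi,\bar\tau$ are adjacent, the separating transposition of some pair of representatives occurs at a position $j,j+1$ with $2\le j\le n-2$ (the end positions $1,2$ and $n-1,n$ being excluded above); this position lies at a unique block boundary, and the case analysis of the previous two paragraphs identifies the unique index $i=j-2\in\{0,\dots,n-4\}$ with $\theta_i(\bar\pi)=\bar\tau$. Since $G_1(U'_n)$ has no loops this forces $\bar\tau\ne\bar\pi$, completing the equivalence. The delicate point throughout is the verification, for the two pair-block generators, that the reorganization crosses exactly one hyperplane rather than two; this is exactly where the precise form of the actions of $\sigma_1$ and $\sigma_{n-3}$ on the block representation, as defined in Subsection~\ref{action-on-chambers}, is used.
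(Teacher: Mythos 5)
Your paragraphs on the ``if'' direction are sound: you correctly match the nontriviality criterion $\phi(\pi)_{i+2}\ne\phi(\pi)_{i+3}$ for the interior generators with the condition that $\pi(i+2),\pi(i+3)$ are not cyclically consecutive, and the choice of the representative $[\pi(3)-1,\pi(3)-2,\pi(3),\dots]$, which turns the block move $\theta_0$ into a single swap of the non-consecutive pair $\{\pi(3)-2,\pi(3)\}$, is exactly the right device. (For the record, the paper states this Observation with no proof at all, so you are supplying an argument the authors omitted; there is no proof of theirs to compare against.)

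The converse direction, however, has a genuine gap, and it sits in your first paragraph. Your claim that $c_{\pi'}$ and $c_{\tau'}$ lie in adjacent chambers of $\AAA'_{n-1}$ \emph{precisely when} $\tau'$ is obtained from $\pi'$ by one adjacent-position transposition of a pair forming an edge of $K'_n$ is false in the ``only if'' half: adjacency of the two $\AAA'_{n-1}$-chambers means only that they are separated by exactly one hyperplane of $\AAA'_{n-1}$, not that the two chosen permutations differ by one swap. For instance, with $n=4$ the permutations $[2,1,4,3]$ and $[3,1,2,4]$ have sign vectors $(-,-)$ and $(+,-)$ with respect to $(H_{13},H_{24})$, so their $\AAA'_3$-chambers are adjacent, yet no single adjacent transposition relates them. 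What general hyperplane theory actually gives is that \emph{some} pair of $\AAA_{n-1}$-chambers facing each other across the common wall differ by a single swap; but the permutations realizing that pair need not be arc permutations, let alone representatives of $\bar\pi$ and $\bar\tau$. Consequently your closing paragraph (``the separating transposition of some pair of representatives\dots'') assumes exactly what has to be proved, and the whole ``every edge comes from some $\theta_i$'' direction is unestablished.

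What is missing is the combinatorial heart of the Observation: if the chambers of two arc classes $\bar\pi,\bar\tau$ are separated by the single hyperplane $H_{ab}$ (say $a$ before $b$ in $\bar\pi$), then $a$ and $b$ must occupy \emph{adjacent} blocks of $\bar\pi$, so that the reversal is forced to be one of the block moves $\theta_i$. A workable route: for any letter $c$ whose block lies strictly between those of $a$ and $b$, both $H_{ac}$ and $H_{cb}$ belong to $\AAA'_{n-1}$ unless $c$ is cyclically consecutive to $a$ or to $b$; since those two hyperplanes are not crossed, $a$ precedes $c$ and $c$ precedes $b$ in $\bar\tau$ as well, contradicting that $b$ precedes $a$ there. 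Hence every intermediate letter lies in $\{a\pm 1, b\pm 1\}$, and these residual configurations must then be eliminated using the prefix-interval property of arc permutations (the prefixes of any representative of $\bar\tau$ must be cyclic intervals, which fails in each residual case). Until an argument of this kind is supplied, the ``only if'' half of the Observation is open.
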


\subsection{A Graph Isomorphism}\label{iso}\ \\

Let $f:CTFT(n)\mapsto U'_n$ be defined as follows: if $[a,a+2]$ is
the short chord labeled $0$ then let
$\{\pi(1),\pi(2)\}=\{a,a+1\}$. For $0<i<n-4$,
 assume that the chord
labeled $i-1$ in $T$ is $[a-k,a+m]$ for some $k,m\ge 1$, $k+m =
i+1$. The chord labeled $i$ is then either $[a-k-1,a+m]$ or
$[a-k,a+m+1]$. Let $i+1$-st subset be $\{a-k-1\}$ in the former
case and $\{a+m\}$ in the latter. Finally, let the last subset
consist of the remaining two letters.

\begin{claim}
The map $f: CTFT(n) \mapsto U'_n$ is a bijection.
\end{claim}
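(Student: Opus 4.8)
The plan is to prove that $f$ is a bijection by exhibiting an explicit inverse $g\colon U'_n\to CTFT(n)$ and checking that $f\circ g$ and $g\circ f$ are the identity; the entire argument rests on a dictionary between the geometry of a triangle-free triangulation and the interval-growth structure of an arc permutation. The starting observation is the structural fact that a triangulation is triangle-free precisely when it has exactly two short chords \cite{AFR}, equivalently when its dual tree is a path. Thus the chords of $T\in CTFT(n)$ are linearly ordered $0,1,\dots,n-4$ so that, as forced by the proper coloring, consecutively labeled chords $i-1$ and $i$ bound a common triangle and the two short chords carry the labels $0$ and $n-4$. A proper coloring amounts exactly to choosing one of the two short chords to be labeled $0$, and this is the choice that fixes the data $\{\pi(1),\pi(2)\}$ in the definition of $f$.

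First I would show that $f$ is well defined, i.e. that its output is a genuine $\tilde K'_n$-class. By Observation~\ref{classes} it is enough to produce the sequence of subsets with singletons in the middle and to verify that the letters are added one endpoint at a time, so that every initial segment is an interval in $\bbz_n$. This is exactly where triangle-freeness is decisive. Writing chord $i-1$ as $[a-k,a+m]$, the unique triangle on its far side that also contains chord $i$ has a third vertex $w$; if $w$ were interior to the far arc, i.e. $w\notin\{a-k-1,a+m+1\}$, then both $[a-k,w]$ and $[a+m,w]$ would be internal edges and the triangle $\{a-k,a+m,w\}$ would have three internal edges, contradicting triangle-freeness. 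Hence chord $i$ is $[a-k-1,a+m]$ or $[a-k,a+m+1]$, and the absorbed interval grows by exactly one vertex on the left or on the right. Consequently the subsets produced by $f$ encode a valid arc permutation, and the final two vertices are forced as the complementary size-two interval, consistent with the second short chord (label $n-4$) closing the snake.

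Next I would define $g$ by reversing each step. The first pair $\{a,a+1\}$ determines $a$ (the unique element of the pair whose successor also lies in it) and hence reconstructs the short chord $[a,a+2]$ labeled $0$, together with the coloring; each singleton $\{\pi(i+2)\}$ records whether chord $i$ extended chord $i-1$ to the left or the right, which rebuilds chord $i$ uniquely; and the final pair is forced, giving the second short chord. One then checks that the triangulation so produced lies in $CTFT(n)$: each triangle added in the reconstruction contains a polygon edge at its freshly absorbed vertex, so it has an external edge and cannot be all-internal, whence the result is triangle-free; and consecutive chords bound a common triangle with two internal edges of consecutive labels, so condition~(2) of the coloring holds, while condition~(1) holds by construction. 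That $f\circ g=\mathrm{id}$ and $g\circ f=\mathrm{id}$ then follows because the two constructions invert one another step by step. As a numerical sanity check, Observation~\ref{classes} gives $|U'_n|=|U_n|/4=n2^{n-4}$, matching the $n$ choices of starting short chord and the $n-4$ independent left/right choices implicit in the reconstruction.

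I expect the main obstacle to be exactly the well-definedness equivalence in both directions: that triangle-freeness of $T$ is equivalent to the one-vertex-at-a-time growth of the interval. The forward implication is the ear argument above; the reverse, that the reconstructed snake never produces an internal triangle, is its mirror image. The secondary difficulty is the index bookkeeping at the two ends—matching the first pair to short chord $0$, the forced last pair to short chord $n-4$, and resolving the apparent off-by-one in the range of $i$—which must be handled carefully so that the middle singletons and the two flanking pairs line up precisely with the chord labels $0,1,\dots,n-4$.
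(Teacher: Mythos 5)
Your proposal is correct and takes essentially the same approach as the paper: the paper's entire proof is the single assertion ``The map $f$ is invertible,'' and your argument supplies exactly the content behind that assertion --- the explicit step-by-step inverse $g$, the ear argument showing triangle-freeness forces the enclosed interval to grow by one vertex at a time, and the check that the reconstructed snake triangulation is triangle-free and properly colored. The details you flag as delicate (the endpoint bookkeeping and the left/right encoding) are handled consistently with the paper's conventions, so there is no gap.
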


\begin{proof}
The map $f$ is invertible.
\end{proof}

Recall the definition of $T^R$ from Section~\ref{P3}.

\begin{observation}\label{reverse}
For every $T\in CTFT(n)$, $f(T^R)$ is obtained from $f(T)$ by
reversing the order of the subsets.
\end{observation}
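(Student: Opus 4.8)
The plan is to read both $f(T)$ and $f(T^R)$ off the \emph{same} underlying uncolored triangulation and to check that reversing the coloring reverses the order in which the subsets defining $f$ are produced. First I would isolate the combinatorial skeleton behind the definition of $f$. Since $T$ is triangle-free it has exactly two short chords, and the proper-coloring conditions force the labels $0,1,\dots,n-4$ to order the chords linearly along the ``spine'' of the triangulation: the chord labeled $0$ is one short chord, and condition~(2) in the definition of a proper coloring (consecutive labels on the two internal edges of any triangle with exactly two internal edges) propagates the labeling one chord at a time until the chord labeled $n-4$, which must be the second short chord. Writing $f(T)=\bar\pi$ with $\pi\in U_n$, the definition of $f$ then builds the nested chain of arcs $A_0\subset A_1\subset\cdots\subset A_{n-4}$ in $\bbz_n$ given by the prefixes $A_i=\{\pi(1),\dots,\pi(i+2)\}$, and records exactly its increments: the first subset is $A_0=\{\pi(1),\pi(2)\}$, the singleton subsets are $A_i\setminus A_{i-1}$ for $1\le i\le n-4$, and the last subset is the two-element complement $\bbz_n\setminus A_{n-4}=\{\pi(n-1),\pi(n)\}$.

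Next I would analyze $T^R$. By definition $T$ and $T^R$ share the same underlying triangulation, hence the same chords, but the chord labeled $i$ in $T$ carries the label $n-4-i$ in $T^R$. Thus the chord labeled $0$ in $T^R$ is the \emph{second} short chord of $T$, and reading the chords of $T^R$ in label order traverses the very same spine in the opposite direction. Writing $f(T^R)=\overline{\pi'}$ and $A_i'=\{\pi'(1),\dots,\pi'(i+2)\}$ for the corresponding chain, the crux is the identity $A_i'=\bbz_n\setminus A_{n-4-i}=\{\pi(n-1-i),\dots,\pi(n)\}$, i.e. the arc grown from the opposite short chord at step $i$ of $T^R$ is exactly the suffix of $\pi$ of length $i+2$. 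This is where the geometry does the work: the physical chord labeled $n-4-i$ in $T$ partitions $\bbz_n$ into the arc $A_{n-4-i}$ grown toward $T$'s chord $0$ and its complement grown toward $T$'s chord $n-4$, and from the standpoint of $T^R$ the latter side is the one adjoined. I would establish this by checking the two base cases $i=0$ and $i=n-4$, which match the two short chords of $T$ with the two distinguished two-element end-subsets, and then propagating by the same one-step rule supplied by condition~(2); equivalently, one shows that the arc permutation attached to $T^R$ is $\pi$ read backwards.

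Granting $A_i'=\{\pi(n-1-i),\dots,\pi(n)\}$, the conclusion is immediate: the singleton adjoined at step $i$ of $f(T^R)$ is $A_i'\setminus A_{i-1}'=\{\pi(n-1-i)\}$ for $1\le i\le n-4$, while the first and last subsets of $f(T^R)$ are $\{\pi(n-1),\pi(n)\}$ and $\{\pi(1),\pi(2)\}$. Comparing with $f(T)=\bigl(\{\pi(1),\pi(2)\},\{\pi(3)\},\dots,\{\pi(n-2)\},\{\pi(n-1),\pi(n)\}\bigr)$ shows that the sequence of subsets of $f(T^R)$ is precisely that of $f(T)$ listed in reverse order, which is the assertion. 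The main obstacle is the middle step: pinning down, from the definition of $f$ together with the proper-coloring rules, that reversing the labeling reverses the spine and hence replaces each growing prefix-arc by its complement read from the other end. Once the endpoint bookkeeping of a single chord is handled correctly---so that the two complementary arc-sides of each chord are distributed consistently with the convention used in the definition of $f$, and the two two-element end-subsets are correctly matched---the remaining comparison of subsets is routine, using only that the complement of an arc of $\bbz_n$ is again an arc.
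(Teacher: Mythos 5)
Your proof is correct. For comparison: the paper offers no proof of this statement at all --- it is recorded as an observation, treated as immediate from the definitions of $f$ and $T^R$ --- so your argument is precisely the verification the authors leave implicit, and it is sound. The crux, which you isolate correctly, is that the subset sequence of $f(T)$ records the chain of half-open arcs $A_i=\{\pi(1),\dots,\pi(i+2)\}$ attached to the nested chords: if the chord labeled $i$ is $\{x,y\}$ with enclosed side $\{x+1,\dots,y-1\}$, then $A_i=\{x,x+1,\dots,y-1\}$. The complement $\{y,y+1,\dots,x-1\}$ of this half-open arc is exactly the half-open arc attached to the same chord when its \emph{other} side is regarded as enclosed, i.e. when the spine is traversed starting from the other short chord; this is why the endpoint convention in the definition of $f$ (new endpoint adjoined in one case, previously exposed endpoint in the other) is self-consistent under reversal, yielding your identity $A_i'=\bbz_n\setminus A_{n-4-i}$, from which the reversal of the subset sequence follows by taking successive differences. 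Your base-cases-plus-propagation scheme for establishing that identity does go through, with the two cases of the one-step rule checking out on both sides.
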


\medskip

Recall from
%~\cite{AFR}
Subsection~\ref{2.2} the affine Weyl group $\tC_{n-4}$-action on
$CTFT(n)$.

%Recall from ~\cite{AFR} that the affine Weyl group $\tC_{n-4}$
%acts naturally on $CTFT(n)$ by flips: generator $s_i$ flips the
%chord labeled $i$ in $T\in CTFT(n)$, provided that the resulting
%colored triangulation still belongs to $CTFT(n)$. If this is not
%the case, $T$ is unchanged by $s_i$.

\medskip

To complete the proof of Theorem~\ref{main111} it suffices to show
that

\begin{proposition}
For every Coxeter generator $s_i$ of $\widetilde C_{n-4}$ $(0\le i\le
n-4)$ and $T\in CTFT(n)$
%$\bar\pi\in U'_n$
$$
f(s_i T) = s_i f(T),
$$
where $s_i f(T):=\theta_i(f(T))$.
\end{proposition}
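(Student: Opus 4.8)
The plan is to prove the intertwining relation $f(s_i T)=\theta_i(f(T))$ by verifying it separately for each type of Coxeter generator, matching the combinatorial description of the flip action on $CTFT(n)$ from Subsection~\ref{2.2} against the action of $\theta_i$ on the series of subsets representing $\bar\pi\in U'_n$ under the bijection $f$. The key observation is that both actions are ``do the move if the result is still legal, otherwise fix the object,'' so it suffices to check two things for each generator: first, that $s_i T\in CTFT(n)$ holds if and only if $\theta_i(f(T))=f(T)\sigma_{i+1}\in U'_n$ (the legality of the flip matches the legality of the subset swap), and second, that when both moves are legal, applying $f$ after the flip yields exactly the subset series produced by $\sigma_{i+1}$.

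The natural way to organize the argument is by translating the flip of the chord labeled $i$ into an operation on the chord sequence of $T$, then reading off via $f$ how the associated subset series changes. First I would recall from the definition of $f$ that the $(i{+}1)$-st subset records the "direction'' (clockwise vs.\ counterclockwise) in which the chord labeled $i$ extends the chord labeled $i{-}1$; flipping the chord labeled $i$ in a triangle with two internal edges $i,i{+}1$ interchanges these two chords, and this corresponds precisely to swapping the $(i{+}1)$-st and $(i{+}2)$-nd singleton subsets, i.e.\ to right multiplication by $\sigma_{i+1}$. This handles the generic interior generators $s_i$ for $1\le i\le n-5$. The boundary generators $s_0$ and $s_{n-4}$ require separate treatment: $s_0$ flips the short chord labeled $0$, which under $f$ acts on the first pair of subsets $\{\pi(1),\pi(2)\}$, and I would check that the prescribed rule for $\bar\pi\sigma_1$ in Subsection~\ref{action-on-chambers} reproduces exactly the effect of relabeling after the flip (and likewise for $s_{n-4}$ via $\bar\pi\sigma_{n-3}$ and the last pair of subsets).

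Concretely I would fix the notation $[a,a+2]$ for the short chord labeled $0$ and track the nested sequence of chords $[a-k,a+m]$ labeled $i-1$; the flip of chord $i$ toggles whether the new vertex is $a-k-1$ or $a+m+1$, and the triangle-free condition together with Proposition~\ref{t.action1} guarantees that the flip keeps us in $CTFT(n)$ exactly when the two endpoints being swapped are genuinely interchangeable, which is the same condition that places $f(T)\sigma_{i+1}$ in $U'_n$. Having matched legality and outcome case by case, the identity $f(s_iT)=\theta_i(f(T))$ follows, and combined with the bijectivity of $f$ (proved in the preceding Claim) and Observation~\ref{adjacent-chambers} this shows $f$ is a graph isomorphism, completing the proof of Theorem~\ref{main111}.

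The main obstacle I expect is bookkeeping at the two boundary generators $s_0$ and $s_{n-4}$, where the subset being modified is a two-element set rather than a singleton and the cyclic arithmetic modulo $n$ interacts with the choice of which of the two proper colorings is in force (the shaded triangle marking chord $0$). Getting the direction conventions consistent so that the explicit $\sigma_1$ and $\sigma_{n-3}$ rules agree with the flip, rather than producing the mirror image, is the delicate point; Observation~\ref{reverse} on how $f$ behaves under color reversal is a useful consistency check here, since it pins down the orientation convention on both sides.
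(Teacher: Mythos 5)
Your proposal matches the paper's proof in both structure and substance: a generator-by-generator case analysis verifying that the flip of the chord labeled $i$ is legal (i.e.\ $s_iT\ne T$) exactly when the corresponding subset swap keeps $f(T)\sigma_{i+1}$ in $U'_n$, and that when both moves are legal the flip of chord $i$ translates under $f$ into the swap of the two adjacent subsets, with the boundary generators treated separately. The only cosmetic difference is that the paper handles $s_{n-4}$ by invoking Observation~\ref{reverse} to reduce it to the $s_0$ case, whereas you propose verifying it directly and use Observation~\ref{reverse} only as a consistency check.
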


\begin{proof}
For $i=0$, let $[a,a+2]$ be the short chord labeled $0$ in $T$.
Then the chord labeled $1$ is either $[a,a+3]$ or $[a-1,a+2]$. In
the first case the short chord labeled $0$ in $s_0 T$ is
$[a+1,a+3]$ and all other chords are unchanged, in particular, the
chord labeled $1$ in $s_0 T$ is $[a,a+3]$. By definition of the
map $f$, the first two subsets in $f(T)$ are $\{a,a+1\},\{a+2\}$
and the first two subsets in $f (s_0 T)$ are $\{a+1,a+2\}, \{a\}$
and the rest are not changed. On the other hand, by definition of
$\theta_0$, the first two subsets in $s_0 f(T)$ are
$\{a+1,a+2\},\{a\}$ and the rest are unchanged. A similar analysis
shows that  $f(s_0 T) = s_0 f(T)$  when the chord labeled $1$ is
$[a-1,a+2]$.

For $0<i<n-4$ let the chord labeled $i-1$ in $T$ be $[a-k,a+m]$
for some $k,m\ge 1$, $k+m = i+1$. The chords labeled $i$ and $i+1$
are then either $[a-k-1,a+m],[a-k-2,a+m]$ respectively, or
$[a-k,a+m+1],[a-k,a+m+2]$ or
 $[a-k-1,a+m], [a-k-1,a+m+1]$  or
$[a-k,a+m+1], [a-k-1,a+m+1]$. In the first two cases $s_i T=T$, so
$f(s_i T)=f(T)$. On the other hand, in these cases
$f(T)\sigma_i\not \in U'_n$, so $s_i f(T)=f(T)$.

If the chords labeled $i$ and $i+1$ in $T$ are
 $[a-k-1,a+m], [a-k-1,a+m+1]$ respectively, then the chords labeled $i$ and $i+1$ in $s_i T$
are $[a-k,a+m+1], [a-k-1,a+m+1]$. So, the $i$-th and $i+1$-st
subsets in $f(T)$ are $\{a-k-1\},\{a+m\}$, and they are switched
in $s_i f(T)$, so same as the corresponding subsets in $f(s_i T)$.
The proof of the forth case is similar.

Finally, by Observation~\ref{reverse}, $f(s_0 T) = s_0 f(T)$
implies that $f(s_{n-4} T) = s_{n-4} f(T)$.

\end{proof}

\section{Proof of Theorem~\ref{main112}}

\subsection{Orienting the Colored Flip Graph}\label{LFG}\ \\

The goal of this subsection is to equip the colored flip graph
$\Gamma_n$ with an edge orientation that will be used to encode
the location of the flipped diagonals. It will be proved later
that this orientation satisfies the conditions of
Theorem~\ref{main112}. Our starting point is the
%different
edge labeling, %from~\cite{AFR},
mentioned in Section~\ref{P3}, which encodes the order of the
chords.

\medskip

Recall from~\cite{AFR} the bijection
$$
\varphi:CTFT(n) \to \bbz_{n}\times \bbz_2^{n-4}
$$
defined as follows: Let $T\in CTFT(n)$. If the (short) chord
labeled $0$ in $T$ is $[a-1,a+1]$ for $a\in \bbz_{n}$, let
$\varphi(T)_0:= a$. For $1\le i\le n-4$, assume that the chord
labeled $i-1$ in $T$ is $[a-k,a+m]$ for some $k,m\ge 1$, $k+m =
i+1$. The chord labeled $i$ is then either $[a-k-1,a+m]$ or
$[a-k,a+m+1]$. Let $\varphi(T)_{i}$ be $0$ in the former case and
$1$ in the latter.

%\begin{remark}{\rm
%The map $f:CTFT(n)\mapsto U'_n$, defined Subsection\ref{iso}, may
%be decomposed as  $f= \psi^{-1}\varphi$.}
%\end{remark}

By definition of the map $\varphi$,

\begin{claim}\label{obs-varphi-0}
For every vector $v=(v_0,\dots,v_{n-4}) \in \bbz_{n}\times \bbz_2^{n-4}$ 
and every $0\le i\le n-4$, the diagonal labeled $i$
in the triangulation $T=\varphi^{-1}(v)$ is $[k,m]$ where 
\[
k := v_0-1 - i + \sum\limits_{j=1}^i v_i \in \bbz_n
\]
and
\[
m := v_0+1+\sum\limits_{j=1}^i v_i \in \bbz_n.
\]
Here $0, 1 \in \bbz_2$ are interpreted as $0, 1 \in \bbz_n$.
%$$
%[v_0-1 - i + \sum\limits_{j=2}^i v_i,v_0+1+\sum\limits_{j=2}^i
%v_i].
%$$
\end{claim}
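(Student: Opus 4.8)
The plan is to prove the formula by induction on $i$, essentially by unfolding the recursive definition of the bijection $\varphi$ given just above the statement. Throughout, I write the diagonal labeled $i$ in $T=\varphi^{-1}(v)$ as $[L_i,R_i]$, where $L_i$ plays the role of the endpoint $k$ and $R_i$ the role of $m$, with all arithmetic taking place in $\bbz_n$. The goal is thus to establish $L_i = v_0-1-i+\sum_{j=1}^i v_j$ and $R_i = v_0+1+\sum_{j=1}^i v_j$. For the base case $i=0$, the definition of $\varphi$ gives $\varphi(T)_0=v_0$, which means the short chord labeled $0$ is $[v_0-1,v_0+1]$; hence $L_0=v_0-1$ and $R_0=v_0+1$, exactly the asserted expressions with the empty sum $\sum_{j=1}^0 v_j=0$.

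For the inductive step I would read the one-step recursion directly off the definition of $\varphi(T)_i$. By that definition, the chord labeled $i$ is obtained from the chord $[L_{i-1},R_{i-1}]$ labeled $i-1$ by decrementing the left endpoint, giving $[L_{i-1}-1,R_{i-1}]$, when $v_i=0$, and by incrementing the right endpoint, giving $[L_{i-1},R_{i-1}+1]$, when $v_i=1$. This can be recorded uniformly as $L_i=L_{i-1}-(1-v_i)$ and $R_i=R_{i-1}+v_i$. Substituting the inductive hypothesis and using $\sum_{j=1}^{i-1}v_j+v_i=\sum_{j=1}^i v_j$ collapses both cases into the claimed closed form; equivalently, telescoping the recursion from $i=0$ yields $L_i=L_0-\sum_{j=1}^i(1-v_j)$ and $R_i=R_0+\sum_{j=1}^i v_j$, which are precisely the two displayed formulas.

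Since each step is a literal transcription of the defining rule for $\varphi$, there is no genuine obstacle here: the claim is little more than the solved form of the recurrence implicit in the definition. The only points requiring a little care are to check that the two branches $v_i=0$ and $v_i=1$ really merge into the single formula $L_i=L_{i-1}-1+v_i$, and to note that reducing modulo $n$ at each stage agrees with reducing only at the end, which is automatic since $\bbz_n$ is a ring. I would also silently read the summand as $\sum_{j=1}^i v_j$, correcting the evident typo $\sum_{j=1}^i v_i$ in the statement.
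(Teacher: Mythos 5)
Your proof is correct and matches the paper's intent exactly: the paper offers no separate argument, introducing the claim simply with ``By definition of the map $\varphi$,'' and your induction (with base case $[v_0-1,v_0+1]$ and the one-step recursion $L_i=L_{i-1}-(1-v_i)$, $R_i=R_{i-1}+v_i$ telescoped) is just that definitional unfolding written out formally. You are also right that $\sum_{j=1}^i v_i$ in the statement is a typo for $\sum_{j=1}^i v_j$.
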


%For every colored triangle-free triangulation $T\in CTFT(n)$ let
%$T^R$ be the same triangulation with reverse order on the
%diagonals.

It follows that

\begin{corollary}\label{varphi-reverse}~\cite[Lemma 5.7]{AFR}
For every $T\in CTFT(n)$, if $\varphi(T)=(v_0,\dots,v_n)$ then
$\varphi(T^R)_0 = 2+\sum\limits_{i=0}^{n-4}v_i  \in \bbz_n$ and
$\varphi(T^R)_i=1-v_{n-3-i} \in \bbz_2$ $(1\le i \le n-4)$.
\end{corollary}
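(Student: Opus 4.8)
The plan is to derive Corollary~\ref{varphi-reverse} directly from Claim~\ref{obs-varphi-0}, using the characterization of $T^R$ as the triangulation with reversed chord labeling. Write $\varphi(T) = v = (v_0, v_1, \dots, v_{n-4})$ and $\varphi(T^R) = w = (w_0, w_1, \dots, w_{n-4})$. The two assertions to prove are $w_0 = 2 + \sum_{i=0}^{n-4} v_i$ and $w_i = 1 - v_{n-3-i}$ for $1 \le i \le n-4$. The key observation is that the set of chords of $T^R$ coincides with the set of chords of $T$, but the chord labeled $i$ in $T^R$ is the chord labeled $n-4-i$ in $T$. Applying Claim~\ref{obs-varphi-0} to both triangulations turns each label-to-chord correspondence into an explicit pair of endpoints in $\bbz_n$, so the whole statement reduces to matching these endpoint formulas.

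First I would establish the formula for $w_0$. By definition the short chord of $T^R$ is the chord labeled $0$ in $T^R$, which is the chord labeled $n-4$ in $T$. By Claim~\ref{obs-varphi-0} with $i = n-4$, this chord has endpoints $k = v_0 - 1 - (n-4) + \sum_{j=1}^{n-4} v_j$ and $m = v_0 + 1 + \sum_{j=1}^{n-4} v_j$ in $\bbz_n$. Since a short chord $[a-1,a+1]$ satisfies $w_0 = a$ with $a$ the midpoint of its two endpoints, and since here $m - k = n - 2 \equiv -2 \pmod n$, the midpoint is $m - 1 = v_0 + \sum_{j=1}^{n-4} v_j = \sum_{j=0}^{n-4} v_j$. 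Comparing with the claimed $w_0 = 2 + \sum_{i=0}^{n-4} v_i$ forces a bookkeeping reconciliation: I expect the discrepancy of $2$ to come from the convention identifying which endpoint plays the role of $a-1$ versus $a+1$ under reversal, together with the short-chord being traversed in the opposite cyclic direction in $T^R$. Pinning down this orientation convention cleanly is where I would be most careful.

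Next I would handle $w_i$ for $1 \le i \le n-4$. Here the plan is to compare consecutive chords. In $T$, passing from the chord labeled $i-1$ to the chord labeled $i$ either decrements the left endpoint (when $v_i = 0$) or increments the right endpoint (when $v_i = 1$); this is exactly the content of Claim~\ref{obs-varphi-0}, since the two endpoint formulas show $k$ decreasing by $1$ precisely when $v_i = 0$ and $m$ increasing by $1$ precisely when $v_i = 1$. In $T^R$ the labels run backwards, so the step from label $i-1$ to label $i$ in $T^R$ corresponds to the reverse of the step from label $n-4-i$ to label $n-3-i$ in $T$, which is governed by $v_{n-3-i}$. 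Reversing the traversal swaps the roles of ``grow on the left'' and ``grow on the right,'' which is exactly the effect of replacing $v_{n-3-i}$ by $1 - v_{n-3-i}$. I would make this precise by writing both $\varphi(T^R)_i$ and $v_{n-3-i}$ as the answer to the same binary question ``was the $\bbz_2$-coordinate a left-step or a right-step?'' asked from opposite ends, and checking that the left/right labeling flips.

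The main obstacle I anticipate is not the structural idea, which is forced, but the modular arithmetic and the consistent choice of orientation on chords when passing to $T^R$. The endpoint formulas in Claim~\ref{obs-varphi-0} carry an asymmetry (one endpoint gets the $-1-i+\sum$ and the other the $+1+\sum$), and reversing the labeling interchanges the two endpoints while also reversing the cyclic sense; getting the extra $+2$ in $w_0$ and the complementation $1 - v_{n-3-i}$ to come out with the right signs requires tracking these conventions without error. In practice I would verify the two formulas against the small case $n=7$ from Figure~\ref{fig:Gamma_7} to fix all sign and offset choices before committing to the general computation.
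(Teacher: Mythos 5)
Your overall strategy is exactly the one the paper intends: the corollary is derived from Claim~\ref{obs-varphi-0} (the paper literally introduces it with ``It follows that''), using the fact that the chord labeled $i$ in $T^R$ is the chord labeled $n-4-i$ in $T$. Your treatment of the coordinates $1\le i\le n-4$ is also essentially right: reversing the traversal of the nested family of chords interchanges the roles of the two endpoints, because the reference vertex of $T^R$ lies on the opposite side of every chord from the reference vertex $a$ of $T$, and this is precisely why a left-step becomes a right-step, giving $\varphi(T^R)_i=1-v_{n-3-i}$.

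However, your computation of $\varphi(T^R)_0$ contains a sign error that you never resolve, and as written the first formula --- which carries the actual content, namely the constant $2$ --- is not proven. By Claim~\ref{obs-varphi-0} with $i=n-4$, the chord labeled $n-4$ in $T$ is $[k,m]$ with $m-k=n-2$, i.e.\ $k\equiv m+2 \pmod n$. Hence this short chord is $[m,m+2]$: the endpoint $m$ plays the role of $a-1$, the endpoint $k$ plays the role of $a+1$, and the middle vertex is $m+1$, not $m-1$. This yields
$$
\varphi(T^R)_0 \;=\; m+1 \;=\; v_0+2+\sum_{j=1}^{n-4}v_j \;=\; 2+\sum_{j=0}^{n-4}v_j ,
$$
which is exactly the claimed formula; there is no ``discrepancy of $2$'' and nothing to attribute to an orientation convention. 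Your proposal instead reads the congruence backwards (in effect taking $k\equiv m-2$), obtains $\sum_{j=0}^{n-4}v_j$, and then defers the missing $+2$ to bookkeeping ``to be pinned down later''; a proof cannot end there, since the unexplained constant is the very thing the statement asserts. The fix is one line, but it must be made: determine from $k\equiv m+2$ which endpoint of $[k,m]$ is $a-1$ and which is $a+1$, and the constant comes out correctly with no residual convention to reconcile.
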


%See~\cite[Lemma 5.7]{AFR}.

\begin{observation}\label{obs-varphi-1}~\cite[Observation 3.1]{AFR}
For every  $T\in CTFT(n)$ and a Coxeter generator $s_i$ of $\widetilde C_{n-4}$
$$
(\varphi(s_0T))_j=\begin{cases} \varphi(T)_j,&\ {\rm{if}}\ j\ne 0,1,\\
\varphi(T)_0+1 \pmod n, &\ {\rm{if}}\  j= 0 \ {\rm{and}}\ \varphi(T)_1=0, \\
\varphi(T)_0-1 \pmod n, &\ {\rm{if}}\  j= 0 \ {\rm{and}}\ \varphi(T)_1=1, \\
\varphi(T)_1+1 \pmod 2, &\ {\rm{if}}\  j= 1 \ {\rm{and}}\ \varphi(T)_1=0; \\
\end{cases}
$$
$$
(\varphi(s_{n-4}T))_j=\begin{cases} \varphi(T)_j,&\ {\rm{if}}\  j\ne n,\\
\varphi(T)_n+1 \pmod 2, &\ {\rm{if}}\  j= n;
\end{cases}
$$
and
$$
(\varphi(s_i T))_j=\varphi(T)_{\sigma_i(j)}\qquad (0<i<n-4);
$$
where $\sigma_i:=(i,i+1)$ the adjacent transposition.
%, $\sigma_i(T)=T$ if and only if $\varphi(T)_{i}=\varphi(T)_{i+1}$; moreover,

%A flip of the diagonal labeled $n-4$ adds 1 to the last entry in
%$\bbz_2$. A flip of the diagonal labeled by $0<i<n-4$ may occur if
%$\varphi(T)_i\ne \varphi(T)_{i+1}$ and it switches these entries.
%Flipping the first diagonal  adds 1 to first entry in $\bbz_{n}$
%if $\varphi(T)_1=0$, or delete 1 from the first entry if
%$\varphi(T)_1=1$; in both cases it adds 1 to $\varphi(T)_1$ in
%$\bbz_2$.
\end{observation}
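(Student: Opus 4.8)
My plan is to verify the three displayed formulas one generator--type at a time, reading off the effect of each flip directly from the geometry of the triangulation and using two inputs already available: the explicit endpoints of the chord labelled $i$ given in Claim~\ref{obs-varphi-0}, and the rule from Subsection~\ref{2.2} that $s_i$ flips the chord labelled $i$ when the result again lies in $CTFT(n)$ and otherwise fixes $T$. Throughout I will use that the chords labelled $0,1,\dots,n-4$ form a chain in which consecutive chords share a triangle, so that chord $i$ is a diagonal of the quadrilateral formed by its two neighbouring triangles.

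First I would dispose of the interior generators $s_i$ with $0<i<n-4$. Writing chords $i-1,i,i+1$ with the endpoints supplied by Claim~\ref{obs-varphi-0}, the fourth vertex of the relevant quadrilateral is controlled by $v_{i+1}$. If $v_i=v_{i+1}$ the flip would replace chord $i$ by a short chord, producing a third short chord and therefore a triangulation that is no longer triangle--free, since a triangulation is triangle--free precisely when it has exactly two short chords~\cite[Claim 2.3]{AFR}; hence $s_iT=T$. If $v_i\ne v_{i+1}$ the flip is admissible and merely reverses the directions of the two consecutive steps, i.e.\ interchanges $v_i$ and $v_{i+1}$ and fixes every other coordinate. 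Both cases are summarised by $\varphi(s_iT)_j=\varphi(T)_{\sigma_i(j)}$, which is the middle formula.

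Next I would treat $s_0$. Here chord $0$ is the short chord $[v_0-1,v_0+1]$ with ear at the vertex $v_0$, and $v_1$ records the side on which the chain continues. Flipping this short chord is always admissible and slides the ear to the neighbouring vertex on that side, so the new short chord is centred at $v_0+1$ or $v_0-1$ according as $v_1=0$ or $v_1=1$; at the same time the first step now points the opposite way, toggling $v_1$. Since the flip leaves all chords labelled $\ge 1$ physically unchanged, the directions between consecutive chords labelled $\ge 2$ are unaffected, so $v_2,\dots,v_{n-4}$ are preserved and only $v_0,v_1$ change; this is exactly the first displayed formula. Finally, the generator $s_{n-4}$ flips the other short chord, the one labelled $n-4$; I would obtain its formula not by redoing the analysis but from the reversal symmetry, since by Observation~\ref{reverse} and Corollary~\ref{varphi-reverse} the relabelling $T\mapsto T^R$ conjugates $s_0$ to $s_{n-4}$, forcing the terminal bit $v_{n-4}$ (and nothing else) to toggle.

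The step I expect to be the main obstacle is the $s_0$ case, because flipping the short chord relocates the anchor vertex $v_0$ and so forces a global re--reading of the proper colouring. The delicate points are to confirm that precisely the two coordinates $v_0$ and $v_1$ change while all deeper bits survive, and to pin down the sign of the shift of $v_0$ as a function of $v_1$; the latter depends on the orientation convention for the labelling of the polygon, and matching it to the stated formula is where I would be most careful.
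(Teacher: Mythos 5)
Your general plan (a direct case analysis on the generators, using the snake structure of the chords, Claim~\ref{obs-varphi-0}, and the criterion that a triangulation is triangle-free if and only if it has exactly two short chords) is reasonable, and is essentially the only route available: the paper gives no proof of this Observation, quoting it from~\cite{AFR}. Your treatment of the interior generators $0<i<n-4$ is correct: if $v_i=v_{i+1}$ the flip would create a third short chord, so $s_iT=T$, while if $v_i\ne v_{i+1}$ the flip interchanges the two bits and fixes all other coordinates, and both cases are captured by $\varphi(s_iT)_j=\varphi(T)_{\sigma_i(j)}$.

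The genuine gap is in the $s_0$ case, precisely at the sign you yourself flagged as the delicate point: you assert it rather than derive it, and the assertion contradicts the paper's definitions. By the definition of $\varphi$ (equivalently by Claim~\ref{obs-varphi-0}), $\varphi(T)_1=0$ means the chord labeled $1$ is $[a-2,a+1]$, where $a=\varphi(T)_0$; the flip of $[a-1,a+1]$ then takes place inside the quadrangle $\{a-2,a-1,a,a+1\}$ and produces $[a-2,a]$, whose apex is $a-1$. So $\varphi(T)_1=0$ forces $\varphi(s_0T)_0=\varphi(T)_0-1$, and $\varphi(T)_1=1$ forces $\varphi(s_0T)_0=\varphi(T)_0+1$. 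Your own guiding principle (``the ear slides to the side on which the chain continues'') says exactly this, since the bit $0$ marks the minus side; yet you conclude the opposite pairing, i.e., you implicitly read the bit $0$ as the plus side. In fact, the Observation as printed has these two lines interchanged and is inconsistent with the paper itself: in the proof that $f(s_0T)=s_0f(T)$ (the Proposition in the section proving Theorem~\ref{main111}), the case where the chord labeled $1$ is $[a,a+3]$ --- i.e.\ $\varphi(T)_1=1$ --- yields a new apex equal to the old one plus $1$, and in Observation~\ref{action-on-vectors} the bit $0$ likewise goes with the decrement. A genuine verification had to surface this discrepancy rather than reproduce the printed signs. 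Moreover, your plan to obtain the $s_{n-4}$ formula by conjugating $s_0$ with $T\mapsto T^R$ via Corollary~\ref{varphi-reverse} would, if executed with your signs, give $\varphi(s_{n-4}T)_0=\varphi(T)_0\pm 2$ instead of $\varphi(T)_0$, because $\varphi(T^R)_0=2+\sum_i\varphi(T)_i$ and the integer $v_0+v_1$ is invariant under $s_0$ only with the corrected signs; so that step of your plan actually refutes, rather than confirms, the $s_0$ formula you stated.
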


We use this observation to orient the edges in $\Gamma_n$.

\begin{defn}\label{orient}
Orient the edges in $\Gamma_n$ as follows: If the diagonal labeled
$n-4$ is flipped orient the corresponding edge from the
triangulation encoded by last entry $0$ to the one with last entry
$1$. If the flip is of the diagonal labeled $0<i<n-4$ orient the
edge from $T$ with $\varphi(T)_i=0, \varphi(T)_{i+1}=1$ to the one
with these two entries switched; if it flips the diagonal labeled
$0$ orient it by the first entry from $T$ with $\varphi(T)_0=j$ to
the one with first entry under $\varphi$ being $j+1$; .
\end{defn}

See Figure~\ref{f.CTFT6_oriented} for the orientation of $\Gamma_6$,
where each colored triangulation $T$ is labeled by the vector $\varphi(T)$.  

\begin{figure}[ht]\label{f.CTFT6_oriented}
\begin{center}
%eps%\epsfysize = 2.0 in \centerline{\epsffile{heptagons_improved.eps}}
%pdf%begin
%trim option's parameter order: left bottom right top
\includegraphics[scale=0.8, trim = 75pt 325pt 0pt 180pt, clip]{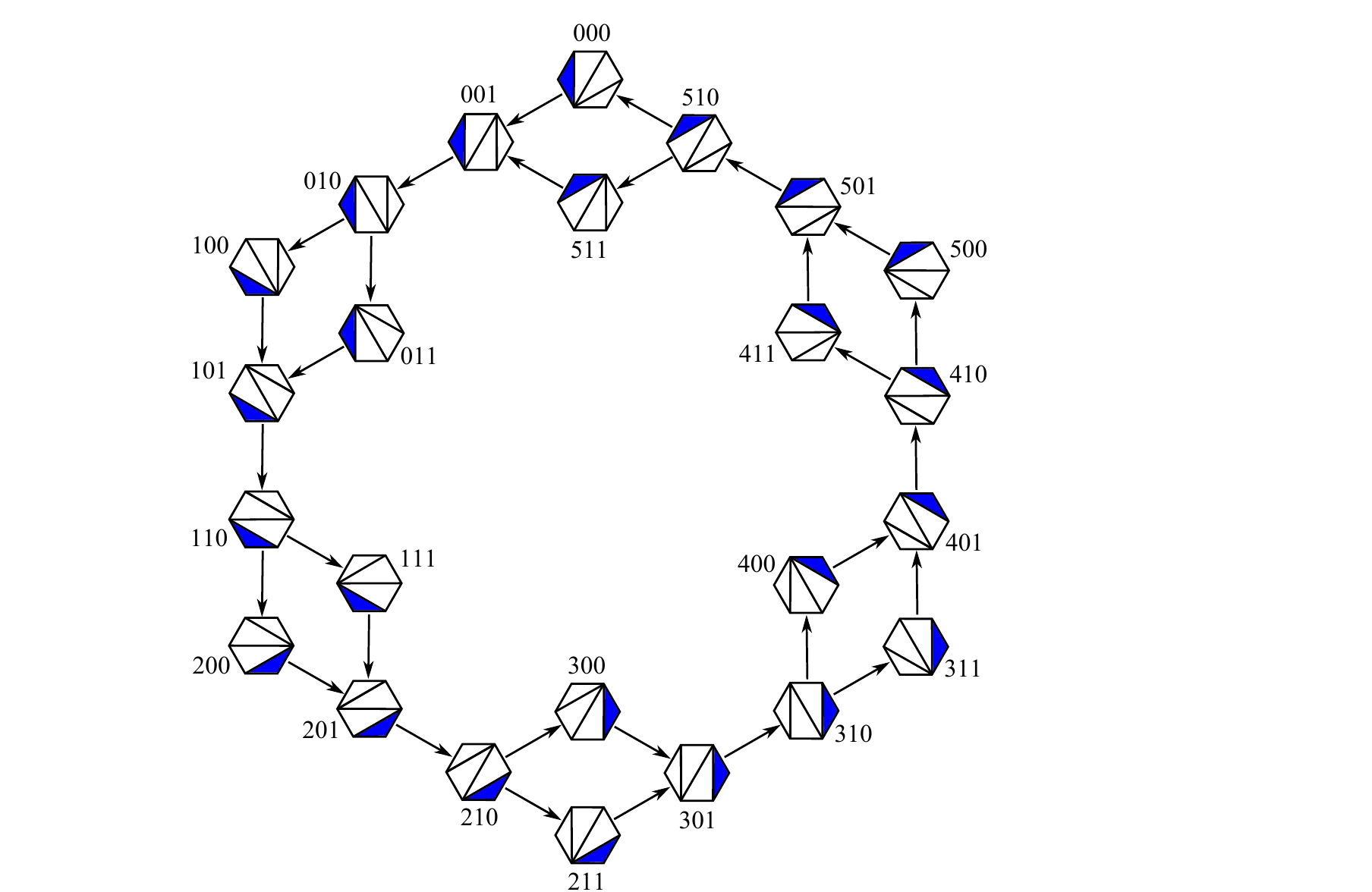}
%pdf%end
\caption{$\Gamma_6$ with orientation} 
\label{fig:Gamma_6_oriented}
\end{center}
\end{figure}

\begin{lemma}\label{coherent-orientations}
For every $T\in CTFT(n)$, the orientation of the edges along any
geodesic from $T$ to $T^R$ is coherent with the orientation of
$\Gamma_n$ described in Definition~\ref{orient};
%Subsection~\ref{LFG};
namely, all edges in
a geodesic have the same orientation as in the oriented $\Gamma_n$
or all have the opposite orientation.
\end{lemma}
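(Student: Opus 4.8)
The plan is to exhibit an integer-valued height function whose value rises by exactly one across every edge oriented as in Definition~\ref{orient}, and then to pin down its total variation between $T$ and $T^R$. Write $\varphi(T)=(v_0,v_1,\dots,v_{n-4})\in\bbz_n\times\bbz_2^{n-4}$, put $N:=n(n-3)/2$, and set $S_i:=\sum_{j=1}^i v_j$ (an ordinary integer, with $S_0=0$). The height I would use is
\[
\lambda(T):=(n-3)\,\tilde v_0+\sum_{i=1}^{n-4} S_i,
\]
where $\tilde v_0\in\bbz$ is a lift of $v_0\in\bbz_n$ that is transported continuously along whatever path is under consideration. The partial sums $S_i$ are genuine functions on $CTFT(n)$; only the first coordinate is cyclic, and the coefficient $n-3$ is chosen to absorb exactly that cyclicity.

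First I would verify the key local property: reading the generator actions from Observation~\ref{obs-varphi-1} together with the orientation of Definition~\ref{orient}, every positively oriented edge raises $\lambda$ by exactly $1$. A positive $s_i$-edge with $0<i<n-4$ turns $(v_i,v_{i+1})=(0,1)$ into $(1,0)$, which raises $S_i$ by one and fixes every other $S_j$ and $\tilde v_0$; a positive $s_{n-4}$-edge raises $v_{n-4}$ from $0$ to $1$, raising $S_{n-4}$ by one; and a positive $s_0$-edge raises $\tilde v_0$ by one while lowering each of $S_1,\dots,S_{n-4}$ by one, for a net change $(n-3)-(n-4)=1$. Consequently, along any path $T=T_0,\dots,T_L=T^R$, if $p$ edges are traversed in agreement with the orientation of $\Gamma_n$ and $q$ against it, then $\lambda(T^R)-\lambda(T)=p-q$ (computed with the transported lift), while $p+q=L$ is the length of the path.

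The heart of the argument is the evaluation of $\lambda(T^R)-\lambda(T)$. Since the $S_i$ are honest functions on $CTFT(n)$, their contribution is path-independent; using $\varphi(T^R)_i=1-v_{n-3-i}$ from Corollary~\ref{varphi-reverse}, a direct summation gives $\sum_i\bigl(S_i(T^R)-S_i(T)\bigr)=\binom{n-3}{2}-(n-3)\Sigma$, where $\Sigma:=\sum_{i=1}^{n-4}v_i$. For the cyclic coordinate, the lift changes along the path by $\Delta\tilde v_0=(2+\Sigma)+nW$ for some integer winding number $W$, since $\varphi(T^R)_0\equiv 2+v_0+\Sigma\pmod n$. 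Substituting into $\lambda$, the $\Sigma$-terms cancel and one obtains the clean identity
\[
\lambda(T^R)-\lambda(T)=\frac{n(n-3)}{2}\,(1+2W)=N\,(1+2W).
\]

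Finally I would invoke Theorem~\ref{antipodes}: a geodesic from $T$ to $T^R$ has length exactly $N$, so $p+q=N$ and hence $|p-q|\le N$. Combined with the displayed identity, $N\,|1+2W|=|p-q|\le N$ forces $|1+2W|\le 1$; as $1+2W$ is odd, $1+2W\in\{1,-1\}$, i.e. $W\in\{0,-1\}$. In the first case $p-q=N=p+q$, so $q=0$ and every edge of the geodesic agrees with the orientation of $\Gamma_n$; in the second $p=0$ and every edge is reversed. This is precisely the asserted coherence. I expect the main obstacle to be the middle step: carrying out the summation for $\sum_i\bigl(S_i(T^R)-S_i(T)\bigr)$ and, above all, correctly bookkeeping the lift of the cyclic coordinate $v_0\in\bbz_n$ so that the winding number $W$ is well defined and the $\Sigma$-dependence provably cancels. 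The local increment check and the concluding length estimate are then routine.
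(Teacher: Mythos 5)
Your proof is correct and takes essentially the same route as the paper: your height function $\lambda=(n-3)\tilde v_0+\sum_{i=1}^{n-4}S_i$ is precisely the paper's rank function $\ell(v_0,\dots,v_{n-4})=\sum_{i=0}^{n-4}(n-3-i)v_i$ (since $\sum_{i=1}^{n-4}S_i=\sum_{j=1}^{n-4}(n-3-j)v_j$), and both arguments show that positively oriented edges raise it by exactly $1$, evaluate the total change from $T$ to $T^R$ via Corollary~\ref{varphi-reverse}, and then use the geodesic length $N=n(n-3)/2$ from Theorem~\ref{antipodes} to force all steps to agree in sign. The only difference is bookkeeping: you handle the cyclic coordinate $v_0$ by an explicit integer lift with winding number $W$, obtaining $\lambda(T^R)-\lambda(T)=N(1+2W)$, whereas the paper works modulo $n(n-3)=2N$; the two devices are equivalent.
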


\begin{proof}
%First, without loss of generality, we may assume that
%$\varphi(T)_0=0$.
Consider the dominance order on vectors in $\bbz_{n}\times
\bbz_2^{n-4}$; namely,
\[
(v_0,\dots,v_{n-4})\le (u_0,\dots,u_{n-4})
\]
if and only if 
\[
\sum\limits_{i=0}^k v_i\le \sum\limits_{i=0}^k u_i \qquad(0 \le k \le n-4),
\]
where $0, \ldots, n-1 \in \bbz_n$ are interpreted as $0, \ldots, n-1 \in \bbz$,
and similarly for $\bbz_2$.

The resulting poset is ranked by
$$
\ell(v_0,\dots,v_{n-4}):=\sum\limits_{i=0}^{n-4} (n-3-i)v_i.
$$
Using Corollary~\ref{varphi-reverse}, the reader can verify that
for every $T\in CTFT(n)$ %with $\varphi(T)_0=0$
$$
\ell(\varphi(T^R))-\ell(\varphi(T)) \equiv {n(n-3)}/{2} \pmod {n(n-3)},
$$
which is the distance between $T$ and $T^R$ (Theorem~\ref{antipodes}).

Finally, notice that for every edge $e=(S_1, S_2)$ in $\Gamma_n$,
%for which $\{\varphi(S_1)_0, \varphi(S_2)_0\}\ne \{0,n-1\}$,
the edge $e$
%edge $(S_1,S_2)$
is oriented from $S_1$ to $S_2$ if and only if
$$
\ell(\varphi(S_2))-\ell(\varphi(S_1)) \equiv 1 \pmod {n(n-3)}
$$
%covers $\varphi(S_1)$ in dominance order.

%Also, it should be noted that by~\cite[Proof of Lemma 5.3]{AFR},
%we may assume that a geodesic from $T$ with $\varphi(T)_0=0$ to
%$T^R$ does not contain a triangulation $S$ with $\varphi(S)_0=n-1$
%(if this is not the case,
%%contains a triangulation $S$ with $\varphi(S)_0=n$
%then it does not contain $S$ with $\varphi(S)_0=\varphi(T^R)_0-1$,
%so we may assume that $\varphi(T^R)_0=0$ and the geodesic from
%$T^R$ to $T$ does not contain  $S$ with $\varphi(S)_0=n$).

One concludes that either all steps in a geodesic increase the
rank function by one modulo $n(n-3)$ or all steps decrease it by one. Hence the
lemma holds.
% for every $T\in CTFT(n)$ with $\varphi(T)_0=0$, and
%thus for every $T\in CTFT(n)$.

\end{proof}

\medskip

We note that this proof essentially appears (implicitly)
in~\cite{AFR}, where an algebraic interpretation of the rank
function as a length function on $\widetilde C_{n-4}$ is given; %for more details see
see, in particular, ~\cite[Sections 3.3 and 5.2]{AFR}.
%~\cite[Proof of Prop. 5.6]{AFR}.

\bigskip

Now color each edge $(S_1,S_2)$ of $\Gamma_n$, oriented from $S_1$ to $S_2$,
by the chord $[i,j]$ which is erased from $S_1$.
%Note that each flip erases a diagonal $[i,j]$ and adds either
%$[i-1,j+1]$ or $[i+1,j-1]$. If the corresponding edge is oriented
%from $T$ to $S$ with respect to the orientation in
%Definition~\ref{orient}, we say that $S$ may be obtained from $T$
%by flipping the diagonal $[i,j]$, and label the oriented edge by
%the
% erased diagonal $[i,j]$.
%with respect to the above orientation.
Ignore the edge-orientation and let $\hat\Gamma_n$ be the
resulting edge-labeled flip graph.
% of colored triangle-free
%triangulations of an $n$-gon.

\subsection{Edge-Colored Graph Isomorphism}\ \\

Consider an edge-labeled version of the graph $G_1(U'_n)$, denoted
by $\hat G_1(U'_n)$,  where the edge between two adjacent chambers
is labeled by the separating hyperplane.

\begin{theorem}\label{main}
The edge-labeled graphs $\hat G_1(U'_{n})$ and $\hat\Gamma_n$ are
isomorphic.
\end{theorem}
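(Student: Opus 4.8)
The plan is to build the edge-labeled isomorphism on top of the unlabeled isomorphism $f \colon CTFT(n) \to U'_n$ already established in Theorem~\ref{main111}, and then to verify that the labels match under $f$. Concretely, I would take the bijection $f$ on vertices and show that it carries each labeled edge of $\hat\Gamma_n$ to the correspondingly labeled edge of $\hat G_1(U'_n)$. By the construction in Subsection~\ref{iso}, $f$ already respects the $\widetilde C_{n-4}$-actions, i.e. $f(s_i T) = \theta_i(f(T))$, so $f$ is a graph isomorphism on the unlabeled level; by Observation~\ref{adjacent-chambers} the edges of $G_1(U'_n)$ are exactly the pairs $(\bar\pi, \theta_i(\bar\pi))$ with $\theta_i(\bar\pi) \neq \bar\pi$. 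Thus the only remaining task is to match the \emph{labels}: each edge of $\hat\Gamma_n$ is colored by the chord $[i,j]$ erased from the source triangulation, while each edge of $\hat G_1(U'_n)$ is labeled by the separating hyperplane $H_{ij} = \{x_i = x_j\}$.

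The key step is therefore an explicit computation of both labels on a generic edge and a check that they coincide. First I would use Claim~\ref{obs-varphi-0} to read off, for a triangulation $T = \varphi^{-1}(v)$, the precise endpoints $k = v_0 - 1 - i + \sum_{j=1}^i v_j$ and $m = v_0 + 1 + \sum_{j=1}^i v_j$ of the chord labeled $i$ that gets flipped; this determines the edge-color $[k,m]$ in $\hat\Gamma_n$. On the other side, an edge of $G_1(U'_n)$ produced by $\theta_i$ corresponds, via the identification of chambers with permutations from Subsection~\ref{H1}, to switching two adjacent subsets in the series representing $\bar\pi$; the separating hyperplane is exactly $H_{uv}$ where $\{u\}, \{v\}$ are the two singleton letters being interchanged (or, at the ends $i=0$ and $i=n-4$, the two letters leaving or entering the boundary pairs $\{\pi(1),\pi(2)\}$ and $\{\pi(n-1),\pi(n)\}$). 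Matching these two descriptions of $f$ — the one coming from $\varphi$ and the chord endpoints, and the one coming from the subset-series representation of $U'_n$ — is the heart of the argument, and it reduces to verifying that the letters defining the subsets in $f(T)$ are precisely the endpoints $k,m$ of the corresponding chords in $T$.

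The main obstacle I expect is the careful bookkeeping at the three boundary cases $i = 0$, $i = n-4$, and the interior generic $i$, together with reconciling the two coordinate systems (the $\varphi$-encoding of $CTFT(n)$ and the $\phi$-style subset encoding of $U'_n$). In particular, one must confirm that the definition of $f$ in Subsection~\ref{iso}, which assigns the $(i{+}1)$-st subset the letter $a-k-1$ or $a+m$ according to whether the chord labeled $i$ is $[a-k-1,a+m]$ or $[a-k,a+m+1]$, is exactly consistent with the chord endpoints computed from $\varphi$ via Claim~\ref{obs-varphi-0}; the $\pm 1$ shifts and the cyclic $\bbz_n$ versus non-cyclic $\bbz$ conventions are where errors hide. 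Once this identification of letters-with-endpoints is in place, the label matching is immediate: the flipped chord $[k,m]$ corresponds under $f$ to interchanging the singletons $k$ and $m$ in the subset series, hence to crossing precisely the hyperplane $x_k = x_m$. I would also invoke the symmetry from Observation~\ref{reverse} (that $f(T^R)$ reverses the order of subsets) to reduce the verification for the generator $s_{n-4}$ to that for $s_0$, shortening the case analysis. Assembling these pieces, $f$ upgrades to the desired edge-labeled isomorphism $\hat\Gamma_n \cong \hat G_1(U'_n)$.
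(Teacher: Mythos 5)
Your proposal follows essentially the same route as the paper: take the vertex bijection $f$ from Theorem~\ref{main111}, identify edges of $G_1(U'_n)$ with the generator moves $\theta_i$ via Observation~\ref{adjacent-chambers}, and then verify the label-matching by a case analysis on $i=0$, interior $i$, and $i=n-4$, checking that the endpoints of the erased chord (with respect to the orientation of Definition~\ref{orient}) coincide with the indices of the separating hyperplane. The paper carries out the same check by computing $\pi\sigma_{i+1}\pi^{-1}$ directly from the subset-series representation of $\bar\pi$ rather than routing the chord endpoints through $\varphi$ and Claim~\ref{obs-varphi-0}, and it handles $\sigma_{n-3}$ directly instead of by the Observation~\ref{reverse} symmetry, but these are only differences in bookkeeping.
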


Note that this theorem implies Theorem %s~\ref{main111} and
~\ref{main112}.

%\medskip

\medskip

\begin{proof}

By Observation~\ref{adjacent-chambers}, two chambers
$\bar\pi,\bar\tau\in U'_n$ are adjacent in $G_1(U'_n)$ if and only
if there exist corresponding arc permutations $\pi,\tau\in U_n$
and  $1< i< n-1$, such that $\pi \sigma_i=\tau$.
%, where $\bar\pi \sigma_i$ is a variant of the natural action by permuting
%letters, as defined as in Subsection~\ref{action-on-chmabers}.
The separating hyperplane is then $x_k=x_m$ if and only if $(k,m)
\pi=\tau$, or equivalently $\pi \sigma_i \pi^{-1}=(k,m)$, for the
transposition $(k,m)\in S_n$.

\medskip

Recall the bijection $f:CTFT(n)\mapsto U'_n$, defined in
Subsection~\ref{iso}. Since $f$ induces a graph isomorphism,
%if $\bar\pi,\bar\tau\in U'_n$ represent adjacent vertices in $G_1(U'_n)$,
for every $1<i<n-1$, if $\pi, \pi \sigma_i$ are two arc
permutations then $f^{-1}(\bar\pi), f^{-1}(\overline{\pi
\sigma_i}))$ forms an edge in $\Gamma_n$. In order to prove
Theorem~\ref{main112}, it suffices to show that $f^{-1}(\bar\pi
\sigma_i))$ is obtained from $f^{-1}(\bar\pi)$ by flipping the
diagonal $[k,m]$, when the edge is oriented from $f^{-1}(\bar\pi)$
to $f^{-1}(\bar\pi \sigma_i)$.

\medskip

Indeed, an edge is oriented from $f^{-1}(\bar\pi)$ to
$f^{-1}(\overline{\pi \sigma_1})$ if and only if the latter
triangulation is obtained from the first by flipping the short
chord labeled $0$ $[a-1,a+1]$; namely, by replacing the diagonal
$[a-1,a+1]$ by $[a,a+2]$, where the diagonal labeled $1$ is
$[a-1,a+2]$. By definition of the map $f$ %Then
the first two subsets in $\bar\pi$ are $\{a-1,a\},\{a+1\}$
 and in $\overline{\pi \sigma_1}$: $\{a,a+1\}, \{a-1\}$. Letting
$\pi=[a,a-1,a+1,\dots]$ one gets $\pi \sigma_1 \pi^{-1}
=(a-1,a+1)$, so the separating hyperplane is $x_{a-1}=x_{a+1}$.

For $1<i<n-3$, an edge is oriented from $f^{-1}(\bar\pi)$ to
$f^{-1}(\overline{\pi \sigma_i})$ if and only if the chord labeled
$i-1$ is $[a-k,a+m]$ and the latter triangulation is obtained from
the first by flipping a diagonal $[a-k-1,a+m]$; namely, by
replacing the diagonal $[a-k-1,a+m]$ by $[a-k,a+m+1]$. Then the
$i$-th and $i+1$-st subsets in $\bar\pi$, which are $\{a-k-1\},
\{a+m\}$, are switched in $\overline{\pi \sigma_{i+1}}$. So
$\pi(i)=a-k-1$ and $\pi(i+1)=a+m+1$, and
 $\pi \sigma_{i+1}
\pi^{-1} =(a-k-1,a+m)$.

Finally, an edge is oriented from $f^{-1}(\bar\pi)$ to
$f^{-1}(\overline{\pi \sigma_{n-3}})$ if and only if the chord
labeled $n-5$ is $[b-2,b+1]$ and the latter triangulation is
obtained from the first by flipping the short chord labeled $n-4$
$[b-1,b+1]$; namely, by replacing the diagonal $[b-1,b+1]$ by
$[b,b+2]$. Then the last two subsets in $\bar\pi$ are
$\{b-1\},\{b,b+1\}$ and in $\overline{\pi \sigma_{n-3}}$:
$\{b\},\{b-1,b\}$. So $\pi=[\dots,b-1,b+1,b]$ one gets $\pi
\sigma_{n-3} \pi^{-1} =[b-1,b+1]$.

\end{proof}

%\section{Geodesics vs Galleries}\label{4}

\section{Proof of Corollary~\ref{RS}}

Recall that $T$ and $T^R$ are antipodes (Theorem~\ref{antipodes}).

%Proposition~\ref{reverse-1} may be restated as follows.

\begin{proposition}\label{reverse-2}
For every colored triangle-free triangulation $T\in CTFT(n)$, the
corresponding chambers in $\AAA(K'_n)$ satisfy
$$
c_{f(T^R)}=-c_{f(T)}.
$$
\end{proposition}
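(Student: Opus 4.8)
The plan is to reduce the statement to a concrete computation on the vector encodings and then invoke the two descriptions of reversal already established in the excerpt. Recall that the chamber $c_{f(T)}$ in $\AAA(K'_n)$ is determined by a representative arc permutation whose associated $\widetilde K'_n$-class is $f(T)$, and that the negative chamber $-c_\pi$ equals $c_{\pi w_0}$ where $w_0=[n,n-1,\dots,1]$ reverses the one-line notation. So the identity $c_{f(T^R)}=-c_{f(T)}$ is really the assertion that reversing the coloring of $T$ corresponds, on the chamber side, to applying $w_0$, i.e. to reversing the order of the underlying sequence of letters.

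First I would unwind both sides through the bijection $f$. By Observation~\ref{reverse}, $f(T^R)$ is obtained from $f(T)$ by reversing the order of the $n-2$ subsets $\{\pi(1),\pi(2)\},\{\pi(3)\},\dots,\{\pi(n-1),\pi(n)\}$. On a representative arc permutation level this means that if $\pi$ represents $f(T)$ then a representative of $f(T^R)$ is obtained by writing the letters of $\pi$ in reverse order (up to the freedom within the first and last two-element subsets, which is exactly the $\widetilde K'_n$-equivalence we are allowed to mod out by). Reversing the one-line notation of $\pi$ is precisely composing with $w_0$ on the left, i.e. $\pi\mapsto w_0\pi$ sends $[\pi(1),\dots,\pi(n)]$ to $[\pi(n),\dots,\pi(1)]$ up to relabeling; I would verify that the chamber $c_{w_0\pi}$ coincides with $-c_\pi$ and that $w_0$ preserves the graph $K'_n$ (which it does, since $w_0$ is a symmetry of the edge set $\{1,2\},\dots,\{n,1\}$ being deleted). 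Thus reversing the subset order carries the class $f(T)$ to the class of the reversed permutation, whose chamber is the antipode.

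Alternatively, and perhaps more cleanly, I would argue purely through the vector encoding $\varphi$. Corollary~\ref{varphi-reverse} gives the effect of reversal on $\varphi(T)$ explicitly: the first coordinate becomes $2+\sum_i v_i\in\bbz_n$ and the remaining coordinates are flipped and reversed, $\varphi(T^R)_i=1-v_{n-3-i}$. Combining this with Claim~\ref{obs-varphi-0}, which tells us exactly which diagonal $[k,m]$ carries each label, one checks that the set of separating hyperplanes between $c_{f(T)}$ and $c_{f(T^R)}$ is the full set of all $n(n-3)/2$ chords. Since a chamber and its negative in a central essential arrangement are separated by \emph{every} hyperplane of the arrangement, and no other pair of chambers has this property, establishing that $c_{f(T)}$ and $c_{f(T^R)}$ are separated by all hyperplanes of $\AAA(K'_n)$ forces $c_{f(T^R)}=-c_{f(T)}$.

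The main obstacle I anticipate is bookkeeping the $\bbz_n$ arithmetic correctly: the shift $2+\sum v_i$ in the first coordinate together with the flip-and-reverse of the binary tail must be shown to translate, via Claim~\ref{obs-varphi-0}, into a genuine reversal of the sequence of cumulative arcs, and hence into left-multiplication by $w_0$ on a representative. In particular I must confirm that the coordinate freedom left by $\widetilde K'_n$-equivalence (the unordered pairs in the first and last subsets) exactly matches the ambiguity in choosing a chamber representative, so that the equality of classes really does descend to equality of chambers $c_{f(T^R)}=-c_{f(T)}$ rather than merely to adjacency or equality of unordered letter-sets. Once the arithmetic of Corollary~\ref{varphi-reverse} is aligned with the antipode description $-c_\pi=c_{\pi w_0}$, the proposition follows.
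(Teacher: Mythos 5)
Your main argument is essentially the paper's own proof: Observation~\ref{reverse} shows that $f(T^R)$ is the $\tilde K'_n$-class of the one-line reversal of any representative $\pi$ of $f(T)$, and the fact recorded in Section 4 that $-c_\pi = c_{\pi w_0}$ (together with $w_0$ preserving the deleted edges of $K'_n$, so the identification descends to chambers of $\AAA(K'_n)$) then gives $c_{f(T^R)}=-c_{f(T)}$, exactly as the paper argues. The only flaw is notational but worth fixing: reversal of one-line notation is \emph{right} multiplication by $w_0$, namely $\pi w_0=[\pi(n),\dots,\pi(1)]$, whereas left multiplication $w_0\pi=[n+1-\pi(1),\dots,n+1-\pi(n)]$ complements values and $c_{w_0\pi}\neq -c_\pi$ in general, so the chamber you must identify with $-c_\pi$ is $c_{\pi w_0}$; with that correction your first argument is complete and the sketched alternative via $\varphi$ and Corollary~\ref{varphi-reverse} is unnecessary.
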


\begin{proof}
Let $w_0:=[n,n-1,n-2,\dots.1]$ be the longest permutation in
$S_n$. It follows from Observation~\ref{reverse} that
%\begin{corollary} For
for every $\pi\in U_n$
$$
(f^{-1}(\bar \pi))^R=f^{-1}(\overline{\pi w_0}).
$$
%\end{corollary}
Notice that the points $\pi$ and $\pi w_0$ in $\bbr^n$ belong to
negative chambers $c$ and $-c$. The proof is completed.

\end{proof}

\medskip

\noindent{\bf Proof of Corollary~\ref{RS}.}
%\begin{proof}
By Proposition~\ref{reverse-2}, the set of hyperplanes which
separate the chamber $c_{f(T)}$ from the chamber $c_{f(T^R)}$ is
the set of all hyperplanes in $\AAA(K'_n)$. By Theorem~\ref{main}
together with Lemma~\ref{coherent-orientations}, one deduces that
all diagonals have to be flipped at least once in a geodesic from
$T$ to $T^R$. Finally, by Theorems~\ref{diameter}
and~\ref{antipodes}, the distance between $T$ and $T^R$ in
$\Gamma_n$ is equal to the number of diagonals in a convex
$n$-gon. Hence, each diagonal is flipped exactly once.

%\end{proof}
\qed

\section{Geodesics and Shifted Tableaux}\label{5}

Let $T_0$ be the {\em canonical colored star triangle-free
triangulation}; that is the triangulation, which consists of the
chords $[0,2],[0,3],\dots,[0,n-2]$ labeled $0,\dots,n-4$
respectively.

\subsection{Order on the Diagonals}\ \\

%\smallskip

%Let $T\in CTFT(n)$ and $u$ a geodesic from $T$ to $T^R$.
By Corollary~\ref{RS}, every geodesic from the canonical colored
star traingle-free triangulation $T_0$ to $T_0^R$ determines a
linear order on the diagonals. The following theorem characterizes
these linear orders.
%on the set of diagonals
%which appear.

\medskip

%\bigskip

%The diagonals of a convex $n$-gon are naturally identified with
%the ordered pairs $\{(i,j):\ 1\le i < j-1 \le n-1\}$.

\begin{theorem}\label{main2} %\ \ \ \ %\\
%\begin{itemize}
%\item[1.]
%\noindent 1.
An order on the set of diagonals
$\{[i,j]:\ 1\le i < j-1 \le n-1\}$
of a convex $n$-gon
appears in geodesics in
$\Gamma_n$ from %the canonical colored star triangle-free triangulation
$T_0$ to its reverse $T_0^R$ if and only if it is a linear
extension of the coordinate-wise order with respect to the natural
order
%one of the
%following orders on the alphabet $\{1,\dots,n\}$:
$$
0<1<2<\cdots<n-1,
$$
or its reverse
$$
0\equiv n<n-1<n-2<\cdots<1.
$$

%\item[2.]
%\end{itemize}
\end{theorem}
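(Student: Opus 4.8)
The plan is to translate geodesics into monotone galleries and then identify the resulting orders on diagonals with the linear extensions of an explicit product order. By Corollary~\ref{RS} a geodesic from $T_0$ to $T_0^R$ flips each diagonal exactly once, so it does determine a linear order; by Theorem~\ref{main112} the flip of a diagonal $[i,j]$ is the crossing of the hyperplane $x_i=x_j$, and by Proposition~\ref{reverse-2} the chamber $c_{f(T_0^R)}$ is the antipode $-c_{f(T_0)}$. Thus a geodesic is a minimal gallery from $c_{f(T_0)}$ to $-c_{f(T_0)}$. By Lemma~\ref{coherent-orientations} every geodesic is monotone for the rank function $\ell$: along it $\ell$ either increases by $1$ at each step or decreases by $1$ at each step. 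I would treat the increasing geodesics first and recover the decreasing ones at the end from the reflection $k\mapsto -k$ of $\bbz_n$, which fixes $T_0$, reverses $\ell$, and carries the natural order $0<1<\cdots<n-1$ to its reverse $0<n-1<\cdots<1$.

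The combinatorial target is the following. For diagonals $[i,j]$, ordered by writing each pair in increasing order for the fixed vertex order and comparing componentwise, let $P$ be the resulting product order (this is exactly the truncated shifted staircase poset whose linear extensions are counted in Section~\ref{5}). I claim that the orders produced by increasing geodesics are precisely the linear extensions of $P$, and, by the symmetry above, those produced by decreasing geodesics are precisely the linear extensions of the reverse order; together these give the statement. Everything then reduces to one Key Lemma: for a triangulation $T$ lying on an increasing geodesic from $T_0$, the set $J(T)$ of diagonals already flipped is an order ideal of $P$, and a diagonal $d\notin J(T)$ can be flipped by a forward (i.e.\ $\ell$-increasing) step from $T$ if and only if $d$ is a minimal element of $P\setminus J(T)$. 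Granting the Key Lemma, necessity is immediate (each geodesic removes minimal elements, hence lists a linear extension), and sufficiency is a greedy argument: any linear extension of $P$ is realized by flipping, at each stage, the prescribed minimal element, which the Lemma guarantees is available and keeps us on an increasing geodesic.

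To prove the Key Lemma I would combine two ingredients. The rank-two ingredient controls triples: for vertices $i<j<k$ all of whose three connecting chords are diagonals, the flat $x_i=x_j=x_k$ carries three hyperplanes of $\AAA(K'_n)$, and projecting the gallery to the plane transverse to this flat shows that the \emph{long} diagonal $[i,k]$ is always crossed strictly between the two \emph{short} ones $[i,j]$ and $[j,k]$; this is exactly the pair of $P$-comparabilities $[i,j]\prec[i,k]\prec[j,k]$ (or their reversal). The direction ingredient then pins down which of the two circular orderings occurs. Here I would use the explicit endpoint formula of Claim~\ref{obs-varphi-0} together with the description of the generators in Observation~\ref{obs-varphi-1}: writing $\varphi(T)=(v_0,\dots,v_{n-4})$ and $S_i=\sum_{j=1}^i v_j$, the diagonal labeled $i$ has endpoints $v_0-1-i+S_i$ and $v_0+1+S_i$, and a forward step raises the rank $\ell$, equivalently moves up by one in the dominance order of Lemma~\ref{coherent-orientations}. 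Computing the endpoints of the diagonals attached to the \emph{available} forward moves, and comparing them in $P$, one checks that they are exactly the minimal elements of the complement of $J(T)$, and that $J(T)$ itself is downward closed. This simultaneously fixes the circular direction to the natural one and yields both halves of the Lemma.

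The main obstacle is precisely this direction ingredient. The rank-two analysis alone is symmetric: it permits either circular ordering of each triple, and monotonicity of $\ell$ does not distinguish them, since every forward crossing raises $\ell$ by one. What breaks the symmetry is the arc-permutation structure carried by $f$ (Observation~\ref{classes}), which severely restricts the flips available from a given chamber; I expect the real work to be in showing that, from a triangulation on an increasing geodesic, the forward-flippable diagonals are \emph{only} the minimal elements of $P\setminus J(T)$ --- equivalently, that a diagonal lying strictly above another unflipped diagonal in $P$ is never available for a forward flip. Establishing this closure/availability statement, rather than the easier fact that minimal elements are available, is the crux; once it is in hand, the ideal property of $J(T)$, necessity, and the greedy sufficiency argument all follow routinely.
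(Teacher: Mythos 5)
Your global framing matches the paper's: by Corollary~\ref{RS}, Theorem~\ref{main112} and Proposition~\ref{reverse-2}, a geodesic is a gallery in $\AAA(K'_n)$ from $c_{f(T_0)}$ to its negative crossing each hyperplane exactly once, the two possible first flips are exchanged by the reflection $k\mapsto n-k$, and the content of the theorem is exactly your Key Lemma. The genuine gap is that the Key Lemma is never proved, and the two ingredients you offer for it do not add up to a proof. You yourself flag the closure statement (a diagonal strictly above an unflipped diagonal in $P$ is never forward-flippable) as the crux and leave it at ``one checks''; but that statement \emph{is} the necessity direction of the theorem, and nothing about it is routine. The paper proves it by a structural analysis of which arc permutations can occur as intermediate chambers: first, a geodesic starting with $[0,2]$ must end with $[n-3,n-1]$ (otherwise it would contain both $S$ and $S^R$, where $S$ has chords $[1,3],[0,3],\dots,[0,n-2]$, and these are antipodes); hence every intermediate chamber $\bar\pi$ satisfies $\pi^{-1}(2)<\pi^{-1}(0)$ and $\pi^{-1}(n-3)<\pi^{-1}(n-1)$; these two inequalities prevent the arc from wrapping around $\bbz_n$, and a three-case analysis of the first letter of $\pi$ then yields precisely the order-ideal property. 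Some argument at this level of detail (or a genuine induction in the $\varphi$-coordinates) is indispensable; without it your proposal is a plan, not a proof.

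Moreover, the rank-two ingredient cannot carry the weight you assign to it, so the difficulty is even more concentrated in the deferred step than you suggest. The covering relations of $P$ are $[i,j]\prec[i+1,j]$ and $[i,j]\prec[i,j+1]$, and the corresponding codimension-two flats $x_i=x_{i+1}=x_j$ and $x_i=x_j=x_{j+1}$ contain only \emph{two} hyperplanes of $\AAA(K'_n)$, because $H_{i,i+1}$ and $H_{j,j+1}$ are among the deleted ones; a pencil of two hyperplanes imposes no betweenness constraint on a minimal gallery. Thus the rank-two analysis produces none of the covering relations of $P$: for example, every triple constraint, even with the natural orientation imposed on all triples, is compatible with crossing $H_{0,3}$ before $H_{0,2}$, since $\{0,2,3\}$ is not a valid triple. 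It is therefore not a matter of ``pinning down the direction'' of each triple; every covering relation must come from the arc-permutation restriction, i.e.\ from the part you left open. Two smaller points: your greedy sufficiency argument rests on the equally unproven ``minimal implies forward-flippable'' half of the Key Lemma (and needs a formulation applying to any monotone partial path, not only to prefixes of geodesics), whereas the paper proves sufficiency more cheaply by exhibiting the lexicographic order and commuting consecutive flips of nested, hence incomparable, diagonals; and the reflection $k\mapsto n-k$ does not fix the colored triangulation $T_0$ --- it sends $T_0$ to $T_0^R$ --- so the symmetry reduction must be combined with reversal of geodesics.
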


%\subsection{Proof of Theorem~\ref{main2}}\ \\

\begin{proof}
%\noindent[1.]
Clearly, every geodesic from $T_0$ to $T_0^R$ starts with either
flipping $[0,2]$ or $[0,n-2]$. By symmetry, exactly half start by
flipping $[0,2]$.

\smallskip

First, we will prove that an order on the set of diagonals of a
convex $n$-gon corresponding to geodesics from $T_0$ to $T_0^R$,
which start by flipping $[0,2]$, is a linear extension of the
coordinate-wise order with respect to the natural order
%one of the
%following orders on the alphabet $\{1,\dots,n\}$:
$ 0<1<2<\cdots<n-1$. %, and vice versa.

\smallskip

Recall that by Corollary~\ref{RS}, every hyperplane is not crossed
more than once. Thus, in order to prove this, it suffices to show
that in every gallery from the identity chamber
$c_{[0,1,2,\dots,n-1]}$ to its negative, that start by crossing
the hyperplane $H_{0,2}$, the hyperplane $H_{k,l}$ is crossed
after the hyperplane $H_{i,j}$, whenever $i+1<j$, $k+1<l$, and
$(i,j)<(k,l)$ in point-wise coordinate order. In other words, it
suffices to prove that for every arc permutation $\pi\in U_n$, if
$\bar\pi\in U'_{n}$ corresponds to a chamber in such a gallery
then $\pi^{-1}(i)<\pi^{-1}(j) \implies \pi^{-1}(k)<\pi^{-1}(l)$.

Clearly, this holds for the arc permutations which correspond to
the identity chamber $c_{[0,1,\dots,n-1]}$ and to its negative
$-c_{[0,1,\dots,n-1]}=c_{[n-1,n-2,\dots,0]}$. With regard to all
other chambers in these galleries,
%%Similarly,  all geodesics from $T_0$ to $T_0^R$ end by flipping
%%$[1,3]$  or $[n-3,n-1]$ {\bf Explain ??}. By Corollary~\ref{RS}
%%{\bf ??},
%%First,
%Now,
notice first, that all geodesics from $T_0$ to $T_0^R$ must end by
flipping either $[n-3,n-1]$ or $[1,3]$. Let $S\in CTFT(n)$ be the
triangulation, which consists of the chords $[1,3],
[0,3],[0,4],\dots,[0,n-2]$ labeled $0,1,\dots,n-4$ respectively.
Then $T_0^R$ is obtained from $S^R$ by flipping $[1,3]$ and $S$ is
obtained from $T_0$ by flipping $[0,2]$. Since $S$ and $S^R$ are
antipodes, it follows that $S$ does not appear in a geodesic from
$T_0$ to $T_0^R$ which start by flipping $[0,2]$. Thus every
geodesic from $T_0$ to $T_0^R$, which start by flipping $[0,2]$
must end by flipping $[n-3,n-1]$.
%Translating this observation to $G_1(U'_{n})$, o
One concludes that
%exactly half
%of the galleries from the identity chamber $c_{[0,1,2,\dots,n-1]}$
%to its negative start by crossing $H_{0,2}$
%%=H_{n,2}$
%and end by crossing $H_{n-3,n-1}$. It follows that
for every $\pi\in U_n$, if $\bar\pi\in U'_{n}$ is a chamber in
such a gallery which is not first or last, then
$\pi^{-1}(2)<\pi^{-1}(0)$ and $\pi^{-1}(n-3)<\pi^{-1}(n-1)$.
%Combining this with the definition of an arc permutation,
Thus the first letter in $\pi$, $\pi(0)$, is not $0$ or $n-1$.
There are three cases to analyze:

If $\pi(0)=1$ then, since $\pi^{-1}(n-3)<\pi^{-1}(n-1)$ and $\pi$
is an arc permutation,
$0=\pi^{-1}(1)<\pi^{-1}(2)<\cdots<\pi^{-1}(n-3)<\pi^{-1}(n-1))$.

If $\pi(0)=n-2$ then, since $\pi^{-1}(2)<\pi^{-1}(0)$ and $\pi$ is
an arc permutation,
$0=\pi^{-1}(n-2)<\pi^{-1}(n-3)<\cdots<\pi^{-1}(2)<\pi^{-1}(0)$.

Finally, if $2\le \pi (0)\le n-3$ then,  since
$\pi^{-1}(n-3)<\pi^{-1}(n-1)$, $\pi^{-1}(2)<\pi^{-1}(0)$ and $\pi$
is an arc permutation, letting $\pi(0):=i$ the following holds:
$0=\pi^{-1}(i)<\pi^{-1}(i-1)\cdots<\pi^{-1}(2)<\pi^{-1}(0)$ and
$0=\pi^{-1}(i)<\pi^{-1}(i+1)<\cdots<\pi^{-1}(n-3)<\pi^{-1}(n-1)$.

%If $\pi^{-1}(2)<\pi^{-1}(1)<\pi^{-1}(0)$ and
%$\pi^{-1}(n-3)<\pi^{-1}(n-2)<\pi^{-1}(n-1)$ then, since  $\pi$ is
%an arc permutation, there exist $1<i<n-2$, such that
%$1=\pi^{-1}(i)<\pi^{-1}(i-1)\cdots<\pi^{-1}(0)$ and
%$1=\pi^{-1}(i)<\pi^{-1}(i+1)<\cdots<\pi^{-1}(n-1)$.
%
%If $\pi^{-1}(2)<\pi^{-1}(0)<\pi^{-1}(1)$ then, since  $\pi$ is an
%arc permutation, there exist $2\le i\le n-3$, such that
%$1=\pi^{-1}(i)<\pi^{-1}(i-1)\cdots<\pi^{-1}(0)$ and
%$1=\pi^{-1}(i)<\pi^{-1}(i+1)<\cdots<\pi^{-1}(n-1)$.
%
%But since $\pi$ is an arc permutation, this implies that there
%exist $1<i<n-1$, such that
%$1=\pi^{-1}(i)<\pi^{-1}(2)<\cdots<\pi^{-1}(0)$ and
%$1=\pi^{-1}(i)<\pi^{-1}(i+1)<\cdots<\pi^{-1}(n-1)$.

\smallskip

It follows that for every $\pi\in U_n$, such that $\bar\pi$ is a
chamber in a gallery from the identity chamber to its negative
that start by flipping $[0,2]$,
 there is no
$(i,j)<(k,l)$ in point-wise coordinate order, with $i+1<j$ and
$k+1<l$, such that $\pi^{-1}(i)<\pi^{-1}(j)$ but
$\pi^{-1}(k)>\pi^{-1}(l)$.  One concludes that there is no
$(i,j)<(k,l)$ in point-wise coordinate order, with $i+1<j$ and
$k+1<l$,
%with $\{i,j\}\cap \{k,l\}\ne \emptyset$,
such that $[i,j]$ is flipped after $[k,l]$.
%$H_{ij}$ is crossed after $H_{kl}$.

\medskip

It remains to prove the opposite direction, namely, to show that
every linear extension of the coordinate-wise order appears as a
geodesic. To prove this, first, notice that the lexicographic
order does appear. Then observe that if $i<j<k<l$ and $[i,l]$ and
$[j,k]$ are consequent flipped diagonals in the geodesic then it
is possible to switch their order in the geodesic. This completes
the proof for geodesics from $T_0$ to $T_0^R$, which start by
flipping $[0,2]$.

\medskip

Finally, to prove that geodesics from $T_0$ to $T_0^R$, which
start by flipping $[0,n-2]$, are characterized by linear
extensions with respect to the order $ 0\equiv
n<n-1<n-2<\cdots<1$, observe that these geodesics  may be obtained
from geodesics that start by flipping $[0,2]$ via the reflection
which maps every $0\le i \le n-1$ to $n-i$.

%The proof that geodesics, which start by flipping $[0,n-2]$, are
%characterized by linear extensions with respect to the order $
%0\equiv n<n-1<n-2<\cdots<1$, is similar and is left to the reader.

\end{proof}

%\begin{corollary}
%%\noindent 2.
%An order on the set of diagonals appears in geodesics from an
%arbitrary triangle-free triangulation $T$ to its reverse $T^R$ if
%and only if its commutation class contains a cyclic rotation of
%the lexicographic order.
%\end{corollary}

%\begin{proof}
%???
%\end{proof}

%\qed

%\bigskip

%\begin{corollary}\label{avoid}
%The chambers which appear in the geodesics from $f(T_0)$ to
%$f(T_0^R)$, which start with crossing the hyperplane $H_{12}$ are
%the chambers in $\AAA(K'_{n-1})$ determined by the permutations
%avoiding the pattern {\em acb} %or avoiding {\em bca}
%in $S_{n-1}$.
%\end{corollary}

\subsection{Skew Shifted Young Lattice}\label{KHYT}%\ \\

\begin{defn}
For a positive integer $n$ let $\Lambda(n)$ be the set of all
partitions with largest part $\le n$ and with all parts distinct,
except possibly the first two parts when they are equal to $n$.
Namely,
\begin{eqnarray*}
\Lambda(n):= \{ \lambda=(\lambda_1,\dots,\lambda_k)&:& k\ge0,\
n\ge \lambda_1\ge \lambda_2>\lambda_3>\cdots>\lambda_k>0 \hbox{ and }\\
& & [\hbox{either } \lambda_1>\lambda_2 \hbox{ or } \lambda_1=
\lambda_2 =n]\}.
\end{eqnarray*}
Let $(\Lambda(n), \subseteq)$ the poset of partitions in
$\Lambda(n)$ ordered by inclusion of the corresponding Young
diagrams.
\end{defn}

%\noindent{\bf Example.}\\
\begin{example}
\begin{eqnarray*}
\Lambda(3)= &\{&(3,3,2,1),\ (3,3,2),\ (3,3,1),\ (3,3),\\
& &(3,2,1),\ (3,2),\ (3,1),\ (3),\ (2,1),\ (2),\ (1), \ ()\quad\}.
\end{eqnarray*}
\end{example}

%$$ \Lambda(n):=
%\{ \lambda=(\lambda_1,\dots,\lambda_k):\ \lambda_1\ge
%\lambda_2>\lambda_3>\cdots>\lambda_k \hbox{ and } [
%  n=
%\lambda_1= \lambda_2 \hbox{ or } n>\lambda_1>\lambda_2 ] \}.
%$$
%Let $(\Lambda(n), \subseteq)$ the poset of partitions in
%$\Lambda(n)$ ordered by containment of the corresponding Young
%diagrams.
%\end{defn}

%\noindent{\bf Example.}\\
%\begin{example}
%$$\Lambda(3)=\{(3,3,2,1),\ (3,3,2),\ (3,3,1),\ (3,3),\ (3,2,1),\
%(3.2),\ (3,1),\ (3),\ (2,1),\ (2),\ (1)\}.$$
%\end{example}

\bigskip

Consider the standard tableaux of
 truncated shifted staircase shape $(n-1,n-1,n-2,n-3,\dots,2,1)$.
Denote this set by $Y(n)$.

\medskip

\begin{example}\label{ex5} %\noindent{\bf Example.}
\rm
The truncated shifted staircase shape
$(3,3,2,1)$ is drawn in the following way:
$$
\begin{array}{ccccc}
X         & X         & X           & *              \\
          & X         & X           & X    \\
          &           & X           & X     \\
          &           &             & X      \\
\end{array}
$$

\smallskip

\noindent There are four standard tableaux of this shape
\[
\begin{array}{ccccc}
1         & 2         & 3           & *              \\
          & 4         & 5           & 6    \\
          &           & 7           & 8     \\
          &           &             & 9      \\
\end{array}
\;,\;
\begin{array}{ccccc}
1         & 2         & 4           & *  \\
          & 3         & 5           & 6  \\
          &           & 7           & 8  \\
          &           &             & 9    \\
\end{array}
\;,\;
\begin{array}{ccccc}
1         & 2         & 3           & *              \\
          & 4         & 5           & 7    \\
          &           & 6           & 8     \\
          &           &             & 9      \\
\end{array}
\;,\;
\begin{array}{ccccc}
1         & 2         & 4           & *              \\
          & 3         & 5           & 7    \\
          &           & 6           & 8     \\
          &           &             & 9      \\
\end{array}
\]
\end{example}

\bigskip

\begin{observation}\label{observation-lambda}
\begin{itemize}
\item[1.] The maximal chains in $(\Lambda(n),\subseteq)$ are
parameterized by the set of standard tableaux of
 truncated shifted staircase shape $(n-1,n-1,n-2,\dots,1)$.
\item[2.] The linear extensions of the coordinate-wise order on
the set
$$
\{(i,j):\ 0\le i+1< j\le n\}\setminus \{(0,n)\}
$$
are parameterized by the set of standard tableaux of
 truncated shifted staircase shape $(n-1,n-1,n-2,\dots,1)$.
\end{itemize}
\end{observation}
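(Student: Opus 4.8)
The plan is to route both parts through one object: for a truncated shifted staircase shape $\Theta$, let $Q_\Theta$ be its \emph{cell poset}, the cells of $\Theta$ ordered by $(r,c)\le(r',c')$ precisely when $r\le r'$ and $c\le c'$. By definition a standard Young tableau of shape $\Theta$ is a bijection from the cells to $\{1,\dots,|\Theta|\}$ increasing along rows and down columns, which is exactly a linear extension of $Q_\Theta$; thus $\SYT(\Theta)$ is canonically identified with the set of linear extensions of $Q_\Theta$. It therefore suffices to match the maximal chains of $(\Lambda(n),\subseteq)$ and the linear extensions of the coordinate-wise poset of part~(2) with linear extensions of such a cell poset.

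For part~(2) I would take $P=\{(i,j):0\le i+1<j\le n\}\setminus\{(0,n)\}$ with the coordinate-wise order and apply the shift $(i,j)\mapsto(r,c):=(i+1,j-1)$. Being a translation, this is order-preserving in both directions; it turns the constraint $i+1<j$ into the shifted condition $r\le c$, the bound $j\le n$ into $c\le n-1$, and the single excluded pair $(0,n)$ into the top-right corner cell --- exactly the cell deleted by the truncation. Hence $P$ is isomorphic to the cell poset of a truncated shifted staircase, and its linear extensions are identified with the corresponding standard tableaux.

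For part~(1) the target is a lattice isomorphism $\Lambda(n)\cong J(Q)$ onto the distributive lattice of order ideals of the cell poset $Q$ of the truncated shifted staircase obtained from the maximal element of $\Lambda(n)$. I would send a partition $\lambda$ to the region $\widehat\lambda$ obtained by shifting the $r$-th row of its ordinary Young diagram rightward by $r-1$ cells; this respects inclusion and carries $(n,n,n-1,\dots,1)$ onto the full shape, the two length-$n$ rows filling the long top rows while the truncated corner stays empty. \textbf{The crux}, and the step I expect to be the main obstacle, is to show that $\widehat\lambda$ is an order ideal of $Q$ if and only if $\lambda\in\Lambda(n)$: comparing the shifted images of rows $r$ and $r+1$ reveals that northwest-closedness across them is equivalent to the strict drop $\lambda_r>\lambda_{r+1}$, with exactly one exception --- between the first two rows the cell that strictness would otherwise demand is the removed corner, so there, and only there, the parts may be equal, and then necessarily both equal $n$. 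This reproduces verbatim the defining clause of $\Lambda(n)$, and it is precisely where the unusual ``distinct except possibly the first two parts when they equal $n$'' condition is seen to encode the truncation.

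Granting $\Lambda(n)\cong J(Q)$, part~(1) follows from the standard correspondence between maximal chains of a distributive lattice $J(Q)$ and linear extensions of $Q$, a maximal chain adjoining the elements of $Q$ one at a time in an order refining $Q$. Composing with the identification of linear extensions with $\SYT$ from the first step finishes both parts and, in particular, exhibits the maximal chains of $\Lambda(n)$ and the linear extensions of $P$ as indexed by the same family of shifted standard tableaux. The remaining bookkeeping --- reading off the row lengths produced by the two constructions and matching them with the truncated shifted staircase named in the statement --- is a routine index check.
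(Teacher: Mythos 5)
Your strategy is the natural one, and in fact the paper offers no proof to compare against: the statement appears there as an unproved Observation. All three of your ingredients are correct. Standard tableaux of a truncated shifted shape are precisely the linear extensions of its cell poset; the translation $(i,j)\mapsto(i+1,j-1)$ is a poset isomorphism from $\{(i,j):\,0\le i+1<j\le n\}\setminus\{(0,n)\}$ onto such a cell poset; and your lattice isomorphism $\Lambda(n)\cong J(Q)$ (the crux about strictness of parts versus northwest-closedness, with equality permitted only between the first two rows and only at value $n$, because the forcing cell there is exactly the truncated corner) is correct, after which maximal chains of $J(Q)$ biject with linear extensions of $Q$ by the standard distributive-lattice argument.

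The genuine problem is the step you wave off as ``a routine index check'': it is not routine, because it fails. Your own constructions produce, in part (2), the cell poset $\{(r,c):\,1\le r\le c\le n-1\}\setminus\{(1,n-1)\}$, i.e.\ the truncated shifted staircase $(n-2,n-2,n-3,\dots,1)$, and, in part (1), the cell poset of $(n,n,n-1,\dots,1)$, the maximal element of $\Lambda(n)$. Neither is the shape $(n-1,n-1,n-2,\dots,1)$ named in the statement, and the two discrepancies go in opposite directions, so no single re-indexing repairs both parts simultaneously. This is an off-by-one error in the paper's statement rather than in your argument, and the paper's own data confirm your version: $(\Lambda(3),\subseteq)$ has exactly four maximal chains, matching the four standard tableaux of shape $(3,3,2,1)$ displayed in Example~\ref{ex5}, whereas the cell poset of shape $(2,2,1)$ is a chain and has a unique tableau; likewise the way part (2) is invoked in Proposition~\ref{SYT-G} (diagonals of an $n$-gon, shape $(n-3,n-3,n-4,\dots,1)$) agrees with your $(n-2,n-2,n-3,\dots,1)$ version under the substitution $n\mapsto n-1$. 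So your proof is sound, but to be complete you must actually carry out the check and state the corrected indices --- equivalently, part (1) holds for $\Lambda(n-1)$ and part (2) holds with $n+1$ in place of $n$ --- rather than defer them.
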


With any standard tableau of truncated shifted staircase shape $T$
associate two words of size ${n\choose 2}-1$, $r(T)$ and $c(T)$,
where $r(T)_i$ ($c(T)_i$), $(1\le i\le {n\choose 2}-1)$,
is the row (respectively, column) where entry $i$ is located.

\begin{example}\label{ex6}\rm
%\noindent{\bf Example.}
Let $P,Q$ be the first two tableaux in Example~\ref{ex5}. Then
$r(P)=(1,1,1,2,2,2,3,3,4)$, $c(P)=(1,2,3,2,3,4,3,4,4)$,
$r(Q)=(1,1,2,1,2,2,3,3,4)$ and $c(Q)=(1,2,2,3,3,4,3,4,4)$,
\end{example}

\bigskip

\subsection{Geodesics and Tableaux}\ \\

%Let $T\in CTFT(n)$ and $u$ a geodesic from $T$ to $T^R$. By
%Corollary~\ref{RS} the geodesic $u$ determines a linear order on the
%diagonals. Let $d(u)_i$ be the diagonal flipped at the $i$-th step
%of $u$ and $d(u):=(d(u)_1,\dots,d(u)_{{n\choose 2}-1})$ be the
%resulting order. Let
%$$
%D(T):=\{ d(u)\ :\ u \hbox{ is a geodesic from $T$ to $T^R$ }\}.
%$$
%The following theorem characterizes the linear orders on the set
%of diagonals which appear in $D(T)$.
%
%\smallskip

Denote the set of geodesics from $T_0\in CTFT(n)$ to $T_0^R$
starting by flipping $[0,2]$ by $D(T_0)^+$.

\begin{proposition}\label{SYT-G}
\begin{itemize}
\item[1.] There is a bijection from the set of geodesics
$D(T_0)^+$ to $Y(n-3)$ (the set of standard tableaux on truncated
shifted staircase partition $(n-3,n-3,n-4,\dots,1)$)
$$
\phi:D(T_0)^+\to Y(n-3).
$$
\item[2.] For every geodesic $u\in D(T_0)^+$, the diagonal flipped
at the $i$-th step is
%determined by the corresponding tableaux, as follows:
$$
%d(u)_i=
[r(\phi(u))_i-1, c(\phi(u))_i+1].
$$
\end{itemize}
\end{proposition}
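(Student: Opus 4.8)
The plan is to read Proposition~\ref{SYT-G} as a repackaging of two facts already in hand. By Theorem~\ref{main2}, a geodesic $u\in D(T_0)^+$ is the same datum as a linear extension of the coordinate-wise order on the diagonals of the $n$-gon: the geodesic flips the diagonals one at a time, and the resulting sequence of flipped diagonals is precisely a linear extension (every diagonal is flipped exactly once, by Corollary~\ref{RS}, and the flip order respects the coordinate-wise order, by the first half of Theorem~\ref{main2}). By Observation~\ref{observation-lambda}(2), such linear extensions are in turn indexed by standard Young tableaux of the truncated shifted staircase. Composing these two bijections yields the map $\phi$; the actual content of the proof is to fix an explicit dictionary between diagonals and cells so that the formula in part~2 emerges, and to check that beginning with the flip of $[0,2]$ selects the natural order (hence $Y(n-3)$) rather than its reverse.

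For part~1 I would make the dictionary explicit: associate to the cell in row $r$, column $c$ of the shape $(n-3,n-3,n-4,\ldots,1)$ the diagonal $[r-1,c+1]$. First I would verify this is a bijection onto the diagonals of the $n$-gon: row $1$ (columns $1,\ldots,n-3$) yields the star diagonals $[0,2],[0,3],\ldots,[0,n-2]$; for $r\ge 2$, row $r$ (columns $r,\ldots,n-2$) yields $[r-1,j]$ with $r+1\le j\le n-1$; and the single truncated corner, at row $1$, column $n-2$, corresponds to $[0,n-1]$, which is an edge rather than a diagonal. A short count confirms that the shape has exactly $n(n-3)/2$ cells, matching both the number of diagonals and the length of a geodesic. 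Under this dictionary the coordinate-wise order $[i,j]\le[k,l]\iff i\le k,\ j\le l$ becomes the cell order ``weakly right and down'', so a linear extension is exactly a filling increasing along rows and down columns, i.e. a standard Young tableau. I would then set $\phi(u)$ to be the tableau whose cell $(r,c)$ records the step at which $[r-1,c+1]$ is flipped along $u$; bijectivity of $\phi$ follows from the two bijections above.

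Part~2 is then immediate from this construction: by definition $r(\phi(u))_i$ and $c(\phi(u))_i$ are the row and column of the cell carrying the entry $i$ of $\phi(u)$, and that cell is, by the dictionary, the diagonal flipped at the $i$-th step; hence this diagonal is $[r(\phi(u))_i-1,\,c(\phi(u))_i+1]$, as claimed (the computation in Example~\ref{ex6} is precisely this readout). I expect the main obstacle to be bookkeeping rather than any deep point: one must track carefully the $\pm 1$ shifts relating vertex labels to row and column indices, confirm that the one removed corner is exactly the non-diagonal $[0,n-1]$ so that the truncated shape (and not the full shifted staircase) appears, and invoke the ``start by flipping $[0,2]$'' hypothesis together with the symmetry argument closing Theorem~\ref{main2} to be sure the image is $Y(n-3)$ for the natural order rather than the tableaux attached to its reverse.
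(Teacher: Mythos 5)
Your proposal is correct and follows essentially the same route as the paper: the paper's entire proof is the one-line ``Combining Theorem~\ref{main2} with Observation~\ref{observation-lambda}(2)'', which is precisely your composition of the geodesic--linear-extension correspondence with the linear-extension--tableau correspondence. Your explicit dictionary $(r,c)\mapsto[r-1,c+1]$, the cell count $n(n-3)/2$, and the identification of the truncated corner with the edge $[0,n-1]$ are exactly the (unstated) content of Observation~\ref{observation-lambda}(2) and of part~2 of the proposition, so you have in effect supplied the details the paper leaves to the reader.
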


\begin{example}
The bijection $\phi$ maps the tableau
\[
\begin{array}{ccccc}
1         & 2         & 4           & *  \\
          & 3         & 5           & 6  \\
          &           & 7           & 8  \\
          &           &             & 9    \\
\end{array}
\]
to the series of diagonals: $[0,2], [0,3], [1,3], [0,4], [1,4],
[1,5], [2,4], [2,5], [3,5].$
\end{example}

\begin{proof} Combining Theorem~\ref{main2} with Observation~\ref{observation-lambda}(2).
%Define a map from the set of geodesics from $T_0$ to $T_0^R$ to
%the set of tableaux $Y(n-3)$ as follows: if the diagonal $[k,l]$
%($k<l$) is flipped in the $i$-th step in the geodesic, insert the
%letter $i$ in the cell $(k,l-1)$. By Theorem~\ref{main21} this map
%is a bijection.
\end{proof}

%To prove this proposition, let
%Let $\Gamma_n^0$ be the subgraph of the flip graph of $CTFT(n)$
%consisting of all geodesics from $T_0$ to $T_0^R$.

%\begin{proposition}
%The (unlabeled) graph $\Gamma_n^0$ and the Hasse diagram of
%$(\Lambda(n),\subseteq)$ are isomorphic graphs.
%\end{proposition}

%\begin{proof}

%????

%\end{proof}

\bigskip

Let $d_n$ denote the number of geodesics from the canonical star
triangulation $T_0$ of an $n$-gon to its reverse $T_0^R$.
%(in the colored flip graph of TFT of a convex n-gon) by $s_n$.
By Proposition~\ref{SYT-G}, $d_n/2$ is equal to the number of
standard tableaux of truncated shifted staircase shape
$(n-3,n-3,,n-4,\dots,1)$.
%
%\medskip
%
%Then
%Let $\lambda$ be a partition with distinct parts, and let
%$g^\lambda$ be the dimension of the corresponding projective
%$S_n$-representation. %Then
Partial results regarding $d_n$ were stated in an early version of
this preprint. Subsequently, an explicit multiplicative formula
was proved by Greta Panova~\cite{Panova} and Ronald C. King and
the authors~\cite{AKR}.

\begin{theorem}
The number of geodesics from the canonical star triangulation
$T_0$ of a convex $n$-gon to its reverse $T_0^R$ is
$$
d_n= g^{[n-6]} \cdot {N \choose 4n-15}\cdot \frac{8(2n-9)}{n-3} =
\frac{N! \cdot 8(2n-9)}{(4n-15)! \cdot (n-3)} \cdot
\prod_{i=0}^{n-7} \frac{i!}{(2i+1)!},
$$
where  $g^{[n-6]}:=g^{(n-6,n-7,\dots,1)}$ is the number of
standard Young tableaux of shifted staircase shape
$(n-6,n-7,\dots,1)$ and $N := n(n-3)/2$.
\end{theorem}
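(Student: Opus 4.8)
By Proposition~\ref{SYT-G}, the set $D(T_0)^+$ of geodesics from $T_0$ to $T_0^R$ that begin with the flip $[0,2]$ is in bijection with $Y(n-3)$, the standard Young tableaux of the truncated shifted staircase shape $\tau=(n-3,n-3,n-4,\dots,1)$; by the symmetry invoked in the proof of Theorem~\ref{main2}, exactly half of all geodesics begin with $[0,2]$, so $d_n=2\,|Y(n-3)|$. Thus it suffices to count $g^{\tau}:=|Y(n-3)|$. Here $\tau$ is the full shifted staircase $\delta_{n-2}=(n-2,n-3,\dots,1)$ with its top-right corner cell deleted, and one checks it has exactly $N=n(n-3)/2$ cells. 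I would first record that the two displayed expressions for $d_n$ are equivalent: substituting the classical Schur--Thrall product formula for the number $g^{[n-6]}$ of standard shifted tableaux of the \emph{untruncated} staircase $\delta_{n-6}=(n-6,n-7,\dots,1)$ supplies precisely the factor $\prod_{i=0}^{n-7}\frac{i!}{(2i+1)!}$ and converts the first form into the second. Hence it is enough to establish the factored form $d_n=g^{[n-6]}\binom{N}{4n-15}\,\frac{8(2n-9)}{n-3}$.

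The plan for computing $g^{\tau}$ is to pass from standard shifted tableaux to a lattice model: I would encode such a tableau as a maximal chain in the shifted Young lattice, equivalently as a family of non-intersecting lattice paths, so that $g^{\tau}$ becomes a determinant (Lindstr\"om--Gessel--Viennot) or a Pfaffian, with the single removed corner entering only as one local modification of the boundary data. As a parallel check I would also run an excited-diagram (Naruse-type) expansion: since $\tau$ differs from a genuine shifted staircase in exactly one cell, the hook-length sum over excited diagrams is short and can be summed in closed form. Either route expresses $g^{\tau}$ as a determinant or finite sum that still has to be collapsed into a product.

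The main obstacle is exactly this collapse: truncation destroys the shifted hook-length formula, so there is no one-step product, and the binomial $\binom{N}{4n-15}$ is \emph{not} a naive shuffle factor (the inner staircase and the outer frame are comparable in the cell poset, so $g^{\tau}$ does not split as a product of linear-extension counts). Instead I would set up a recurrence in $n$ by conditioning on the location of the largest entry $N$, which must sit at one of the few outer corners of $\tau$; deleting that corner reduces $\tau$ to a small number of slightly smaller truncated or near-staircase shapes, giving a linear recurrence. I would then verify that the claimed closed form satisfies this recurrence with the correct base case, and finally \emph{recognize} in the solved expression the staircase factor $g^{[n-6]}$, the binomial on $4n-15=N-\binom{n-5}{2}$ cells (the size of the two-layer border frame lying outside $\delta_{n-6}$), and the residual factor $\frac{8(2n-9)}{n-3}$. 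This product-to-product bookkeeping is the delicate part; the underlying determinant and Pfaffian evaluations are carried out in~\cite{Panova} and~\cite{AKR}.
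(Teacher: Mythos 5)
Your proposal is correct and follows essentially the same route as the paper: the paper's entire argument consists of the reduction (via Proposition~\ref{SYT-G} and the half-and-half symmetry) of $d_n$ to twice the number of standard Young tableaux of the truncated shifted staircase $(n-3,n-3,n-4,\dots,1)$, followed by a citation of~\cite{Panova} and~\cite{AKR} for the multiplicative formula --- exactly the reduction and deferral you make, with your Schur--Thrall check of the equivalence of the two displayed forms being a routine verification the paper leaves implicit. Your intermediate sketches (LGV/Pfaffian, excited diagrams, recurrence) are not carried out and are not needed for this structure of proof, since, like the paper, you ultimately rest the hard enumeration on the same two references.
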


%\section{Final Remarks and Open Problems}

%Finally, we note that most of the results in this paper may be
%extended to the $\widetilde C_n$-action on arc permutations, which was
%presented in Section~\ref{2nd-action}.

%\bigskip

%\noindent {\bf Acknowledgments:}

\end{document}